\newcommand{\widesim}[2][3.0]{
  \mathrel{\overset{#2}{\scalebox{#1}[1]{$\sim$}}}
}
\let\@fnsymbol\@arabic 
\theoremstyle{plain}
\newtheorem{theorem}{Theorem}[section]
\newtheorem{proposition}[theorem]{Proposition}
\newtheorem{corollary}[theorem]{Corollary}
\newtheorem{lemma}[theorem]{Lemma}
\theoremstyle{remark}
\newtheorem{remark}[theorem]{Remark}
\newtheorem{example}[theorem]{Example}
\theoremstyle{definition}
\newtheorem{assumption}{Assumption}
\newcommand{\E}{\mathbf E}
\renewcommand{\P}{\mathbf P}
\newcommand{\T}{\mathbb T}
\newcommand{\R}{\mathbb R}
\newcommand{\N}{\mathbb N}
\newcommand{\Z}{\mathbb Z}
\newcommand{\Cp}{\mathbb C}
\renewcommand{\S}{\mathbb S}
\newcommand{\Bo}{{B\setminus\{0\}}}
\newcommand{\Ro}[1]{\mathbb R^{#1}\setminus \{0\}}
\renewcommand{\C}{\mathcal C}
\newcommand{\D}{\mathcal D}
\newcommand{\A}{\mathcal A}
\renewcommand{\L}{\mathcal L}
\newcommand{\e}{\epsilon}
\newcommand{\F}{\mathcal F}
\newcommand{\B}{\mathcal B}
\newcommand{\ind}{\mathbf 1}
\newcommand{\DivEps}[1]{\frac{{#1}}{\epsilon}}
\newcommand{\M}{\mathcal M}
\numberwithin{equation}{section} 
\begin{document}

\title{\bf\Large 
Homogenization of Non-symmetric Jump Processes
}
\author{\bf{\normalsize{
Qiao Huang\footnote{School of Mathematics and Statistics \& Center for Mathematical Sciences, Huazhong University of Science and Technology, Wuhan, Hubei 430074, P.R. China. Email: \texttt{hq932309@alumni.hust.edu.cn}} $^,$\footnote{Present Address: Division of Mathematical Sciences, School of Physical and Mathematical Sciences, Nanyang Technological University, 21 Nanyang Link, Singapore 637371. Email: \texttt{qiao.huang@ntu.edu.sg}},
Jinqiao Duan\footnote{Department of Applied Mathematics, Illinois Institute of Technology, Chicago, IL 60616, USA. Email: \texttt{duan@iit.edu}},
Renming Song\footnote{Department of Mathematics, University of Illinois at Urbana-Champaign, Urbana, IL 61801, USA. Email: \texttt{rsong@illinois.edu}}}}}

\date{}
\maketitle
\vspace{-0.3in}
\begin{abstract}
  We study the homogenization for a class of non-symmetric pure jump Feller processes. The jump intensity involves periodic and aperiodic constituents, as well as oscillating and non-oscillating constituents. This means that the noise can come both from the underlying periodic medium and from external environments, and is allowed to have different scales. 
  It turns out that the Feller process converges in distribution, as the scaling parameter goes to zero, to a L\'evy process. 
  As special cases of our result, some homogenization problems studied in previous works can be recovered. We also generalize the approach to the homogenization of symmetric stable-like processes with variable order. Moreover, we present some numerical experiments to demonstrate the usage of our homogenization results in the numerical approximation of first exit times.
  \bigskip\\
  \textbf{AMS 2020 Mathematics Subject Classification:} 35B27, 60G53, 60F17, 35R09.  \\
  \textbf{Keywords and Phrases:} Feller processes, weak convergence, nonlocal operators, stable-like processes. 
\end{abstract}

\section{Introduction}

As a subclass of Markov processes, Feller processes possess lots of nice properties in both probabilistic and analytic aspects \cite{App09,BSW13,EK09,Kal06}. 
The generator of a Feller process is in general a nonlocal operator. It looks locally like the generator of a L\'evy process, in the sense that it is given by a L\'evy-Khintchine type representation with an $x$-dependent L\'evy triplet $(b(x),a(x),\eta(x,\cdot))$.
For this reason, Feller processes are sometimes called L\'evy-type processes or jump-diffusions, their generators are called L\'evy-type operators. Feller processes with no diffusion parts at all, i.e., $a\equiv0$, are called \emph{(pure) jump processes}. If the generator of a Feller process is non-symmetric as an operator, the process is called \emph{non-symmetric}.

Homogenization problems arise from the study of porous media, composite materials and other physical and engineering systems \cite{BLP78,CD99,CP99}. Generally speaking, in a periodic structure, such as medium or material, the heterogeneities are relatively small compared to its global dimension. Thus, two scales characterize the motion of particles in the structure: the microscopic one describing the heterogeneities, and the macroscopic one describing the global behavior of particles. 
The aim of homogenization is precisely to give the macroscopic properties of the particles by taking into account the properties of the microscopic structure.

In this paper, we focus on the homogenization of jump processes, periodic in space and locally periodic in noise.
Consider a pure jump process in a periodic medium. The drift $b(x)$ and the jump kernel $\eta(x,\cdot)$ are periodic and have a small scale in the spatial variable $x$, due to heterogeneities. In mathematical formulation, the small scale is represented by a small parameter $\e>0$. From the realistic point of view, the noise may occur not only from the underlying periodic medium, but also from external environments. So we may assume that the jump kernel $\eta(x,dz)$ have mixed scales in the noise variable $z$.
These suggest that the generators of jump processes in periodic medium will take the following form:
\begin{equation}\label{Ae}
  \begin{split}
     \A^\e f(x) =&\ \int_{\Ro d} \left[ f( x+z)-f(x) - z\cdot\nabla f(x) \ind_{[1,2)}(\alpha)\ind_B(z) \right] \kappa(x/\e,z,z/\e)J(z)dz \\
       & + \left( \textstyle{\frac{1}{\e^{\alpha-1}}} b(x/\e) + c(x/\e) \right) \cdot \nabla f(x) \ind_{(1,2)}(\alpha).
  \end{split}
\end{equation}
Here and after, we denote by $B$ the unit open ball in $\R^d$, and by $S:=\partial B$ the unit sphere. 
Besides, the drift functions $b,c:\R^d\to\R^d$ are Borel measurable and periodic, $J:\Ro d\to(0,\infty)$ is the density of a (not necessarily symmetric) $\alpha$-stable L\'evy measure \cite{Sat99}, with $\alpha\in(0,2)$. More precise assumptions will be made in the next section. 

The \emph{jump coefficient} $\kappa:\R^d\times(\Ro d)\times(\Ro d)\to [0,\infty)$ is a Borel measurable function, periodic in its first variable. The normal scale of $\kappa$ in $z$ corresponds to the noise constituent coming from external environments. Furthermore, we assume that there exists a function $\kappa^*:\R^d\times(\Ro d)\times(\Ro d)\times(\Ro d)\to [0,\infty)$ periodic in its first and third variables, such that $\kappa(x,z,u)=\kappa^*(x,z,u,u)$. In this case, the jump coefficient $\kappa(x/\e,z,z/\e)$ in \eqref{Ae} is locally periodic in noise variable $z$. It means that the small noise scale can be decomposed into two constituents, corresponding to the periodic medium and the external environments respectively. 

Under some regularity assumptions (see the next section), each L\'evy-type operator $\A^\e$ can generate a Feller process on $\R^d$, say $X^\e$. Our aim is to identify the limit of $X^\e$, when the scaling parameter $\e$ goes to zero. It turns out that (see Theorem \ref{homogenization}), the limit process $X$, in the sense of convergence in distribution, is a L\'evy process with the following generator:
\begin{equation*}
  \begin{split}
     \bar\A f(x) =&\ \int_{\Ro d} \left[ f( x+z)-f(x) - z\cdot\nabla f(x) \ind_{[1,2)}(\alpha)\ind_{\{|z|<1\}} \right] \bar\kappa(z) J(z)dz \\
     &\ + \bar c\cdot \nabla f(x) \ind_{(1,2)}(\alpha),
  \end{split}
\end{equation*}
where $\bar\kappa$ is a homogenized jump coefficient related to the function $\kappa$, $\bar c$ is a homogenized constant.

Homogenization of Feller processes with jumps has been investigated by a number of authors. To name a few, the paper \cite{HIT77} considered the one-dimensional pure-jump case, and \cite{Fra06} studied the homogenization of stable-like processes with variable order. See also \cite{Tom92} for a multi-dimensional generalization with diffusion terms involved. The paper \cite{SU21} investigated the homogenization problem of a class of pure-jump L\'evy processes using a pure analytical approach --- Mosco convergence. Recently, the same authors studied in \cite{HDS22} the periodic homogenization of SDEs with jump noise and corresponding nonlocal PDEs. Under the setting of the present paper, many homogenization problems in the literature mentioned above can be recovered. See Section \ref{example} for comparisons.

The sequel of this paper is organized as follows. In the next section, we list our main assumptions, and prove some preliminary results such as the well-posedness of martingale problems, invariance and ergodicity, etc. Some technical results will be left into appendices, with no affect to the smoothness of reading. In Section \ref{main}, we prove our main result to identify the homogenization limit. Some examples of resolving the homogenization problems in previous works are presented in Section \ref{example}. Section \ref{stable-like} is devoted to the stable-like case that cannot be covered by previous sections. Some numerical experiments for visualizing the homogenization result and approximating the first exit time are also provided in this section. 

\section{General assumptions and preliminary results}\label{Kernel&Ergodicity}

In the section, we collect general assumptions and some results we need. The most crucial results are Corollary \ref{ergodic} and \ref{Poisson}. The former allows us to pass the functional convergence in the main theorem in next section, while the latter is the well-posedness of a Poisson equation that will be used to deal with the drift $\frac{1}{\e^{\alpha-1}}b$. Most proofs in this section are quite short. We put other auxiliary but technical results into Appendix.

\subsection{Assumptions}
Firstly, we list our assumptions on the functions $b,c$, $\kappa$ (or $\kappa^*$) and $J$.
\begin{assumption}\label{bc}
  The functions $b,c$ are in the H\"older class $\C^\beta$ for some $\beta\in(0,1)$, and they are periodic of period 1.
\end{assumption}
\begin{assumption}
  The function $(x,z,u,v)\to\kappa^*(x,z,u,v)$ is periodic of period 1 in $x$ and $u$; the function $\kappa(x,z,u):=\kappa^*(x,z,u,u)$ satisfies that there exist constants $\kappa_1,\kappa_2,\kappa_3>0$, such that for the same $\beta$ of \ref{bc}, and all $x,x_1,x_2\in\R^d$ and $z,u\in\Ro d$,
  \begin{gather}
    \kappa_1\le\kappa(x,z,u)\le\kappa_2, \label{non-degenerate} \\
    |\kappa(x_1,z,u)-\kappa(x_2,z,u)|\le \kappa_3|x_1-x_2|^\beta. \label{Holder}
  \end{gather}
  There exists a function $(x,z,u)\to\kappa_0(x,z,u)$, periodic of period 1 in $x$ and $u$ and also satisfying \eqref{non-degenerate} and \eqref{Holder}, such that for all $x\in\R^d$ and $z\in\Ro d$,
  \begin{equation}\label{kappa_0}
    |\kappa^*(x,z,z/\e,z/\e) - \kappa_0(x,z,z/\e)| \to 0, \quad \e\to 0^+.
  \end{equation}
  We assume also that there exists a function $\tilde\kappa:\R^d\times(\Ro d)\to [0,\infty)$ such that for all $z\in\Ro d$,
  \begin{equation}\label{ez}
    \sup_{x\in\R^d} |\kappa(x,\e z,z) - \tilde\kappa(x,z)| \to 0, \quad \e\to 0^+.
  \end{equation}
  In the case $\alpha\in(1,2)$ and $b\ne0$, we assume further that $\alpha+\beta\ne 2$ and for all $z\in\Ro d$,
  \begin{equation}\label{ez-1}
    \frac{1}{\e^{\alpha-1}} \sup_{x\in\R^d} | \kappa(x,\e z,z)- \tilde\kappa(x,z)| \to0, \quad \e\to 0^+.
  \end{equation}
\end{assumption}

\begin{assumption}\label{J}
  Assume that the function $J$ is positive homogeneous of degree $-(d+\alpha)$ for some $\alpha\in(0,2)$, that is,
  \begin{equation}\label{homogeneous}
    J(r z)=r^{-(d+\alpha)}J(z), \quad r>0, z\in\Ro d,
  \end{equation}
  and is bounded between two positive constants on the unit $(d-1)$-sphere $S$, i.e., there exist constants $j_1,j_2>0$, such that for all $\xi\in S$,
  \begin{equation}\label{bound}
    j_1 \le J(\xi) \le j_2.
  \end{equation}
  In the case $\alpha = 1$, we assume additionally that for each $x\in\R^d$ and $r_1,r_2\in(0,\infty)$,
  \begin{equation}\label{alpha=1}
    \int_{S} \xi \kappa(x,r_1\xi,r_2\xi) J(\xi) d\xi = 0.
  \end{equation}
\end{assumption}

We denote by $\C(\T^d)$ the space of continuous functions on the flat torus $\T^d:=\R^d/\Z^d$ endowed with the supremum norm $\|f\|_\infty := \sup_{x\in\T^d}|f(x)|$.
Denote by $\C^k(\T^d)$, with integer $k\ge0$, the space of continuous functions on $\T^d$ possessing derivatives of orders not greater than $k$, endowed with the norm $\|f\|_{\C^k} :=\|f\|_\infty + \sum_{|\alpha|\le k}\sup_{x\in\T^d}|\nabla^\alpha f(x)|$. 
For a non-integer $\gamma>0$, the H\"older space $\C^\gamma(\T^d)$ are defined as the subspaces of $\C^{\lfloor\gamma\rfloor}$ consisting of functions whose $\lfloor\gamma\rfloor$-th order partial derivatives are locally H\"older continuous with exponent $\gamma-\lfloor\gamma\rfloor$. The spaces $\C^\gamma(\T^d)$ is a Banach space endowed with the norm $\|f\|_{\C^\gamma} :=\|f\|_{\C^{\lfloor\gamma\rfloor}} + \sup_{x,y\in\T^d,x\ne y}\frac{|f(x)-f(y)|}{|x-y|^{\gamma-\lfloor\gamma\rfloor}}$.
The space of infinitely differentiable functions on $\T^d$ is denoted by $\C^\infty(\T^d)$.

\begin{remark}
  (i). We shall always identify a periodic function on $\R^d$ of period 1 with its restriction on the compact space $\T^d$. Then Assumption \ref{bc} amounts to saying that $b,c\in\C^\beta(\T^d)$.

  (ii). The H\"older exponent $\beta$ in \ref{bc} and \eqref{Holder} does not need to be the same. But in view of the embedding of H\"older spaces on compact spaces, we can assume them to be the same, without losing any generality. The assumption $\alpha+\beta\ne 2$ is due to \cite{Bas09}, 
  whose results will be used in Corollary \ref{Poisson-zero}.

  (iii). 
  The relation \eqref{ez} or \eqref{ez-1} ensures that the function $\tilde\kappa(x,z)$ is periodic in $x$ and also satisfies \eqref{non-degenerate} and \eqref{Holder} with same constants $\beta,\kappa_1,\kappa_2,\kappa_3$. We also remark that only the assumptions \eqref{kappa_0}--\eqref{ez-1} really contribute to the identification of homogenization limit, see Lemma \ref{invatiant-meausre} and the main result Theorem \ref{homogenization}; while all other assumptions are for constructing Feller processes and estimating heat kernels, see Proposition \ref{heat_kernel}.


  (iv). 
  A typical example for assumptions \eqref{ez} and \eqref{ez-1} to hold is that $\kappa(x,z,u)$ can be written as the quotient of two positive homogeneous functions in $z$. In the case that $\alpha\in(1,2)$ and $b\ne 0$, the convergence \eqref{ez-1} implies \eqref{ez}. In this case, there involves singularity in the drift coefficient $\frac{1}{\e^{\alpha-1}}b$, so that we need more regularities for $\kappa$ to cancel that singularity, as we will see in the proof of Theorem \ref{homogenization}.

  (v). The positive homogeneity assumption on $J$ is equivalent to saying that $J$ is the density of an $\alpha$-stable L\'evy measure (cf. \cite[Theorem 14.3]{Sat99}).
  By \eqref{homogeneous}, $J(z) = J(|z|\cdot\frac{z}{|z|}) = |z|^{-(d+\alpha)}J(\frac{z}{|z|})$. Then assumption \eqref{bound} implies
  \begin{equation}\label{j}
    j_1 |z|^{-(d+\alpha)} \le J(z) \le j_2 |z|^{-(d+\alpha)}, \quad z\in\Ro d,
  \end{equation}
  that is, $J$ is comparable with the density of the rotation invariant $\alpha$-stable L\'evy measure.

  (vi). It is easy to verify that the assumptions \eqref{non-degenerate} and \eqref{Holder} for $\kappa$ (and hence the same for $\tilde\kappa$ as we have seen in the third remark), and \eqref{homogeneous}--\eqref{alpha=1} for $J$, ensure all assumptions in \cite{GS19} for $\alpha\in(0,1)\cup(1,2)$ and in \cite{Szc18} for $\alpha=1$. We will use the results therein in the sequel.
\end{remark}

%
%
%


\subsection{Feller processes}

We need some auxiliary operators and processes. In fact, we will rescale the operator $\A^\e$ and its canonical process in a effective fashion. For this purpose, we define the following nonlocal operators for $f\in\C^\infty(\T^d)$, the space of all smooth functions on the flat torus $\T^d$ (i.e., smooth periodic functions of period 1),
\begin{equation}\label{Ae-tilde}
  \begin{split}
    \tilde\A^\e f(x) =&\ \int_{\Ro d} \left[ f( x+z)-f(x) - z\cdot\nabla f(x) \left(\ind_{\{1\}}(\alpha)\ind_B(z) + \ind_{(1,2)}(\alpha)\ind_B(\e z) \right) \right]\kappa(x,\e z,z)J(z)dz  \\
    &\ +\left(b(x)+\e^{\alpha-1}c(x)\right) \cdot \nabla f(x) \ind_{(1,2)}(\alpha), \quad \e>0,
  \end{split}
\end{equation}
\begin{equation}\label{A-tilde}
  \begin{split}
    \tilde\A f(x) =&\ \int_{\Ro d} \left[ f( x+z)-f(x) - z\cdot\nabla f(x) \left(\ind_{\{1\}}(\alpha)\ind_B(z) + \ind_{(1,2)}(\alpha)\right) \right] \tilde\kappa(x,z)J(z)dz \\
  &\ + b(x)\cdot \nabla f(x) \ind_{(1,2)}(\alpha).
  \end{split}
\end{equation}

For notational simplicity, we shall allow the parameter $\e$ to be zero so that $\tilde\A^0:=\tilde\A$.
The periodicity and continuity of the function $x\to\kappa(x,z,u)$ and \eqref{non-degenerate}, \eqref{alpha=1} and \eqref{j} imply that $\tilde\A^\e$, $\e\ge0$, are all well-defined unbounded operators on $(\C(\T^d),\|\cdot\|_\infty)$, whose domains contain $\C^\infty(\T^d)$. 
Moreover, it is easy to verify by \eqref{homogeneous} and \eqref{alpha=1} that for each $\e>0$, the operator $\tilde\A^\e$ is a rescaling of $\A^\e$ in the sense
\begin{equation}\label{rescaling}
  \tilde \A^\e f(x) = \e^{\alpha}(\A^\e f_\e)(\e x), \quad f\in\C^\infty(\T^d).
\end{equation}
Here and after, we denote 
$f_\e(x):=f(x/\e)$.

Denote by $\D=\D(\R_+;\R^d)$ ($\D_{\mathrm{per}}=\D(\R_+;\T^d)$) the space of all $\R^d$- ($\T^d$-) valued c\`adl\`ag functions on $\R_+:=[0,\infty)$, equipped with the Skorokhod topology. The following proposition tells us that the operators $\A^\e$, $\tilde\A^\e$ and $\tilde\A$ can generate Feller processes, and the state space of the Feller processes associated to $\tilde\A^\e$ or $\tilde\A$ can be taken as $\T^d$, which is a compact space. Meanwhile, the heat kernel estimates for $\tilde\A^\e$ and $\tilde\A$ is crucial in proving the ergodicity of the associated processes. We also find the core for $\tilde\A^\e$ and $\tilde\A$, which we will use to show the convergence of the associated invariant measures in the sequel.
\begin{proposition}\label{heat_kernel}
  Assume that \ref{bc}, \ref{J} and \eqref{non-degenerate}, \eqref{Holder} hold with constants $\alpha\in(0,2)$ and $\beta\in(0,1)$.

  (i). For every $\e>0$ and $x\in\R^d$, the martingale problem for $(\A^\e,\delta_x)$ has a unique solution $\P^\e_x$ on $(\D,\B(\D))$. The coordinate process $X^\e$ is an $\R^d$-valued Feller process starting from $x$.

  (ii). For every $\e\in[0,1]$ and $x\in\T^d$, the martingale problem for $(\tilde\A^\e,\delta_x)$ has a unique solution $\tilde\P^\e_x$ on $(\D_{\mathrm{per}},\B(\D_{\mathrm{per}}))$. The coordinate process $\tilde X^\e$ is a $\T^d$-valued Feller process starting from $x$ with generator the closure of $(\tilde\A^\e,\C^\infty(\T^d))$, and has a transition probability density $\tilde p^\e(t;x,y)$, i.e., $\tilde\P^\e_x(\tilde X^\e_t\in A)=\int_A \tilde p^\e(t;x,y)dy$, $A\in\B(\T^d)$. Moreover, \\
  (ii.1). the transition probability density $\tilde p^\e(t;x,y)$ is jointly continuous on $(0,\infty)\times\T^d\times\T^d$; \\
  (ii.2). for every $T>0$, there exists a constant $0<C<1$independent of $\e\in[0,1]$, such that for all $t\in (0,T]$ and $x,y\in \T^d$,
  \begin{equation}\label{heat-kernel-est}
    \tilde p^\e(t;x,y) \ge C\sum_{l\in\Z^d}\left[t^{-d/\alpha}\wedge \left(t |x-y+l|^{-(d+\alpha)}\right)\right].
  \end{equation}
\end{proposition}


\begin{proof}
  All assertions for the case $\alpha\in(0,1)$ follow from \cite[Theorem 1.1, Theorem 1.3, Theorem 1.4, Remark 1.5]{GS19}, the assertions for the case $\alpha=1$ can be found in \cite[Theorem 2.1, Theorem 2.3, Theorem 2.4]{Szc18}. In particular, for these two cases, the constant $C$ in the estimate \eqref{heat-kernel-est} depends only on $(d,\alpha,\beta,\kappa_1,\kappa_2,\kappa_3,j_1,j_2)$ and hence independent of $\e\ge0$, since $\kappa(x,\e z,z)$ and $\tilde\kappa(x,z)$, the only quantities in  $\{\tilde\A^\e:\e\ge0\}$ that depend on $\e$ when $\alpha\in(0,1]$, satisfies \eqref{non-degenerate} and \eqref{Holder} with uniform constants $\kappa_1,\kappa_2$. For the case $\alpha\in(1,2)$, the properties of $\tilde p^\e$ can be found in \cite[Theorem 1.5]{CZ18}, we also put its proof into Appendix for reader's convenience, see Proposition \ref{q-properties}. In particular, by (iii) of Proposition \ref{q-properties}, the constant $C$ of \eqref{heat-kernel-est} depends on $(d,\alpha,\beta,\kappa_1,\kappa_2,\kappa_3,j_1,j_2)$ and the upper bound of the drift of each $\tilde\A^\e$, $\e\in[0,1]$. The drift of $\tilde\A^0 = \tilde\A$ is $b$, while that of $\tilde\A^\e$ with $\e\in(0,1]$ is
  \begin{equation*}
    b(x)+\e^{\alpha-1}c(x) - \int_{1\le |z|<\frac{1}{\e}} z \kappa(x,\e z,z)J(z)dz
  \end{equation*}
  whose absolute value is bounded by $\|b\|_\infty + \|c\|_\infty + \frac{\kappa_2}{\alpha-1}$ uniformly for $\e\in(0,1]$. Thus, $C$ in \eqref{heat-kernel-est} can be chosen as independent of $\e\in[0,1]$.
  The proofs of the rest parts are tedious, especially that $\C^\infty(\T^d)$ is the core of the generators, and we leave them into Appendix, see Proposition \ref{Feller} and Corollary \ref{SDE-weak}.
\end{proof}

Of course, each of the processes $X^\e$, $\tilde X$ and $\tilde X^\e$ is defined on its own stochastic basis. However, by taking product of the probability spaces, it is always possible to assume that:
\begin{center}
  $X^\e$, $\tilde X$ and $\tilde X^\e$, $\e>0$, are all defined on the same probability space $(\Omega,\F,\P)$.
\end{center}
We also assume for simplicity that 
\begin{center}
  $X^\e_0=\tilde X_0=\tilde X^\e_0=0$.
\end{center}

If we identify a periodic function on $\R^d$ of period $\e$ with its restriction on the $\e$-torus $\T^d_\e:=\e\T^d$, then each $\A^\e$ maps $\C^\infty(\T^d_\e)$ into $\C^\infty(\T^d_\e)$ by virtue of the periodicity of $\kappa$ in $x$. In view of this, the canonical process $X^\e$ can also be treated as a processes taking values on $\T^d_\e$, via the quotient map $\R^d \to \T^d_\e$. We will use this treatment \emph{only} in the rest of this section, the benefit is the following relation, which follows from the well-known fact that Feller semigroups and Feller processes are in one-to-one correspondence if we identify the processes that have same finite-dimensional distributions (see, e.g., \cite{BSW13}).

\begin{lemma}\label{rescale}
  We have the following indentity in law
  $$\{\tilde X_t^\e\}_{t\ge0} \stackrel{\mathtt d}{=} \left\{ \textstyle{\DivEps 1} X^\e_{\e^{\alpha} t} \right\}_{t\ge0}, \quad\text{for each } \e>0.$$
\end{lemma}

\begin{proof}
  We derive the generator for the Feller process $\left\{ \textstyle{\DivEps 1} X^\e_{\e^{\alpha} t} \right\}_{t\ge0}$. For $f\in \C^\infty(\T^d)$, by \eqref{rescaling},
  \begin{equation*}
    \lim_{t\downarrow0}\frac{\E^\e_{\e x} \left[ f\left( \DivEps 1 X^\e_{\e^{\alpha} t} \right) \right] -f(x)}{t} = \e^{\alpha} \lim_{t\downarrow0}\frac{\E^\e_{\e x} \left[ f_\e\left( X^\e_{\e^{\alpha} t} \right) \right] -f(x)}{\e^{\alpha}t} = \e^{\alpha}(\A^\e f_\e)(\e x) = \tilde \A^\e f(x).
  \end{equation*}
  Therefore, the Feller semigroup associated to $\left\{ \textstyle{\DivEps 1} X^\e_{\e^{\alpha} t} \right\}_{t\ge0}$ is also generated by the closure of $(\tilde\A^\e,\C^\infty(\T^d))$.
\end{proof}

Denote by $\{\tilde P^\e_t\}_{t\ge0}$ (or $\{\tilde P_t\}_{t\ge0}$) the Feller semigroup of $\tilde X^\e$ (or $\tilde X$). Let $\tilde X^{0}=\tilde X$ and $\tilde P^0_t=\tilde P_t$. Now utilizing the lower bound of the transition probability density $\tilde p^\e(t;x,y)$, we obtain the following exponential ergodicity.
\begin{proposition}\label{exponential-ergodicity}
  For each $\e\in[0,1]$, the Feller process $\tilde X^\e$ possesses a unique invariant probability distribution $\mu_\e$ on $\T^d$. Moreover, there exist positive constants $C$ and $\rho$ which are independent of $\e\in[0,1]$, such that for each periodic bounded Borel function $f$ on $\R^d$ (i.e., $f$ is bounded Borel on $\T^d$),
  \begin{equation*}
    \sup_{x\in\T^d} \left| \tilde P^\e_t f(x)-\int_{\T^d}f(y)\mu_\e(dy) \right| \le C\|f\|_\infty e^{-\rho t}
  \end{equation*}
  for every $t\ge0$.
\end{proposition}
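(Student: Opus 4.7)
The plan is to derive a Doeblin-type minorization condition at time $t_0=1$ from the lower bound of Theorem \ref{heat_kernel}(ii), then obtain both uniqueness of the invariant measure and the stated exponential mixing via the standard coupling / Doeblin argument. I apply Theorem \ref{heat_kernel}(ii) at $t=1$: for any $x,y\in\T^d$, identified with the fundamental domain $[0,1)^d$, there always exists at least one lattice vector $l\in\Z^d$ with $|x-y+l|\le \sqrt d/2$, so the series in the stated lower bound is bounded below by a strictly positive constant. Hence there exists $c_0>0$, independent of $\e$ and of $x,y$, such that $\tilde p^\e(1;x,y)\ge c_0$ on $\T^d\times\T^d$. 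Letting $m$ denote normalized Lebesgue measure on $\T^d$ (a probability measure), this reads as the Doeblin minorization
$$\tilde P^\e_1(x,A)\ge c_0\, m(A), \qquad A\in\B(\T^d),\; x\in\T^d.$$

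Next I would establish existence and uniqueness of $\mu_\e$. Existence on the compact space $\T^d$ follows from Krylov--Bogolyubov applied to the Feller semigroup $\{\tilde P^\e_t\}_{t\ge0}$. For uniqueness together with a total-variation contraction, the Doeblin condition gives, for every pair of probability measures $\nu_1,\nu_2$ on $\T^d$,
$$\|\nu_1 \tilde P^\e_1-\nu_2 \tilde P^\e_1\|_{TV}\le (1-c_0)\,\|\nu_1-\nu_2\|_{TV}.$$
Applied to two invariant measures this forces equality, yielding uniqueness of $\mu_\e$. Applied with $\nu_1=\delta_x$ and $\nu_2=\mu_\e$ it yields $\sup_{x\in\T^d}\|\tilde P^\e_1(x,\cdot)-\mu_\e\|_{TV}\le 1-c_0$, and iterating gives $\sup_x\|\tilde P^\e_n(x,\cdot)-\mu_\e\|_{TV}\le (1-c_0)^n$ for every integer $n\ge 0$.

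To pass from integer to arbitrary times, I write $t=n+s$ with $n=\lfloor t\rfloor$, $s\in[0,1)$. Using that Markov kernels are nonexpansive in total variation and that $\mu_\e$ is $\tilde P^\e_s$-invariant,
$$\sup_x\|\tilde P^\e_t(x,\cdot)-\mu_\e\|_{TV}\le (1-c_0)^n\le (1-c_0)^{-1}e^{-C_2 t},\qquad C_2:=-\log(1-c_0).$$
For bounded Borel $f$, the inequality $|\tilde P^\e_t f(x)-\int f\,d\mu_\e|\le 2\|f\|_0\,\|\tilde P^\e_t(x,\cdot)-\mu_\e\|_{TV}$ then gives the statement with $C_1=2/(1-c_0)$.

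The only real obstacle is verifying that the constant $C$ in Theorem \ref{heat_kernel}(ii) (at $T=1$) can be chosen uniformly in $\e\ge0$, so that $c_0$, and hence $C_1,C_2$, are genuinely $\e$-independent; this uniformity is what makes the proposition useful for the homogenization programme later on. Inspecting the cited heat-kernel estimates \cite{GS18,Szc18,CZ18}, the lower bound depends only on the structural constants $\kappa_1,\kappa_2,\kappa_3,\beta,j_1,j_2,\|b\|_\beta,\|c\|_\beta$, which are unchanged when one replaces $\kappa(x,z,u)$ by $\kappa(x,\e z,z)$ in the definition of $\tilde\A^\e$, so the required uniformity holds and the argument goes through.
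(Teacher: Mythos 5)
Your proposal is correct and follows essentially the same route the paper indicates: the paper proves this by invoking the lower bound on $\tilde p^\e(t;x,y)$ from Theorem \ref{heat_kernel}(ii) and then running the argument of \cite[Proposition 4.6]{HDS18}, which is precisely the Doeblin-minorization/coupling argument you carry out. Your additional observation that the constant in the minorization (and hence $C_1,C_2$) can be chosen uniformly in $\e$ -- because replacing $\kappa(x,z,u)$ by $\kappa(x,\e z,z)$ and $b$ by the effective drift of $\tilde\A^\e$ leaves the structural constants $\kappa_1,\kappa_2,\kappa_3,\beta,j_1,j_2$ and the $\C^\beta$-norm of the drift uniformly bounded -- is a useful remark: the proposition as stated does not explicitly claim $\e$-uniformity, but this uniformity is implicitly needed later (e.g.\ in Corollary \ref{ergodic}), so it is good that you made it explicit.
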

\begin{proof}
  The proof is similar to \cite[Theorem 3.3.2]{BLP78}, see also \cite[Lemma 4.6]{HDS22}. The only problem is to show that the two constants $C$ and $\rho$ can be chosen to be independent of $\e\in[0,1]$. Thanks to the Doeblin-type result in \cite[Theorem 3.3.1]{BLP78}, it suffices to show that the map $\T^d\times\T^d\ni(x,y) \mapsto\tilde p^\e(1;x,y)$ is bounded from below by a positive constant independent of $x, y$ and $\e$. This follows immediately from the transition density estimate in \eqref{heat-kernel-est} together with the compactness of the state space $\T^d$ and the joint continuity of $\tilde p^\e$.
\end{proof}

Denote by $\mu=\mu_0$ the invariant probability measure of $\tilde X$. 

\begin{lemma}\label{invatiant-meausre}
  As $\epsilon\to0^+$, we have the weak convergence $\mu_\epsilon \Rightarrow \mu$.
\end{lemma}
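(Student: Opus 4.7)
The plan is a standard tightness-and-identification argument. Since $\T^d$ is compact, the family $\{\mu_\epsilon\}_{\epsilon > 0}$ is automatically tight in $\mathcal P(\T^d)$, so it suffices to show that every subsequential weak limit equals $\mu$. Fix any sequence $\epsilon_n \downarrow 0$ along which $\mu_{\epsilon_n} \to \mu_*$ weakly; the goal becomes to verify that $\mu_*$ is invariant for $\tilde X$, whence $\mu_* = \mu$ by the uniqueness part of Proposition \ref{exponential-ergodicity}.

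The limit will be identified via the generator characterization of invariance (Echeverr\'ia, see e.g.\ \cite[Theorem 4.9.17]{EK09}): since $\C^\infty(\T^d)$ is a core for each $\tilde\A^\epsilon$ by Theorem \ref{heat_kernel}, invariance of $\mu_\epsilon$ for $\tilde X^\epsilon$ is equivalent to
$$\int_{\T^d} \tilde\A^\epsilon f \, d\mu_\epsilon = 0, \qquad f \in \C^\infty(\T^d).$$
The crucial step will be to establish the uniform convergence of generators
\begin{equation*}
  \|\tilde\A^\epsilon f - \tilde\A f\|_0 \longrightarrow 0 \quad \text{as } \epsilon \to 0^+, \qquad f \in \C^\infty(\T^d).
\end{equation*}
Granted this, splitting
$$0 = \int \tilde\A^{\epsilon_n} f \, d\mu_{\epsilon_n} = \int \bigl(\tilde\A^{\epsilon_n} f - \tilde\A f\bigr) \, d\mu_{\epsilon_n} + \int \tilde\A f \, d\mu_{\epsilon_n}$$
and letting $n \to \infty$ yields $\int \tilde\A f \, d\mu_* = 0$ (the first term vanishes by the uniform convergence, the second converges because $\tilde\A f \in \C(\T^d)$ and $\mu_{\epsilon_n} \to \mu_*$ weakly). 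Applying the converse direction of the Echeverr\'ia criterion to $\tilde\A$ identifies $\mu_*$ as an invariant measure for $\tilde X$, and uniqueness finishes the proof.

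For the uniform generator convergence, the cases $\alpha \in (0,1)$ and $\alpha = 1$ are routine: after writing $\tilde\A^\epsilon f - \tilde\A f$ as a single integral whose integrand is $[f(x+z) - f(x) - z\cdot\nabla f(x)\, \ind_{\{1\}}(\alpha)\, \ind_B(z)]\, [\kappa(x,\epsilon z,z) - \kappa(x,z,z)]\, J(z)$, one obtains an $\epsilon$-independent integrable dominating function by combining Taylor's theorem with \eqref{j}, while \eqref{ez} delivers pointwise convergence to zero uniformly in $x$; dominated convergence then closes the argument. For $\alpha \in (1,2)$ the analysis is more delicate. Rearranging $\tilde\A^\epsilon f - \tilde\A f$ produces three summands: (a) an integral involving $\kappa(x,\epsilon z,z) - \kappa(x,z,z)$ with compensator $\ind_B(\epsilon z)$; (b) a tail integral $\int_{|z| \ge 1/\epsilon} z \cdot \nabla f(x)\, \kappa(x,z,z)\, J(z)\, dz$; (c) an extra drift $\epsilon^{\alpha-1} c(x) \cdot \nabla f(x)$. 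Pieces (b) and (c) are both $O(\epsilon^{\alpha-1})$ uniformly in $x$, using $\int_{|z| \ge M} |z| J(z)\, dz \lesssim M^{1-\alpha}$ and the boundedness of $c$. Piece (a) will be handled by a cutoff at $|z| = R$: on $|z| \le R$, dominated convergence applies with majorant $C_f |z|^{2-d-\alpha}$ (integrable since $\alpha < 2$), and on $|z| > R$ a crude bound of order $R^{1-\alpha}$ holds uniformly in $\epsilon$ (finite since $\alpha > 1$), so the tail can be made arbitrarily small independently of $\epsilon$.

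The main technical obstacle is the uniform convergence for $\alpha \in (1,2)$: no single integrable global majorant exists because $\int_{\R^d} |z|^{2-d-\alpha} dz$ diverges at infinity, so direct dominated convergence fails and the truncation-plus-uniform-tail-bound outlined above is essential. Everything else --- tightness on a compact space, the Echeverr\'ia correspondence, and the extraction of weak limits --- is standard.
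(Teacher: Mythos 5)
Your proof is correct and reaches the same conclusion, but by a genuinely different route than the paper. Both arguments hinge on the same key technical fact --- the uniform generator convergence $\|\tilde\A^\e f-\tilde\A f\|_0\to 0$ for $f\in\C^\infty(\T^d)$ --- but then diverge. The paper feeds this into the Trotter--Kato theorem to obtain strong convergence of semigroups $\tilde P^\e_t f\to\tilde P_t f$, and then invokes \cite[Lemma 2.4]{HP08}, which upgrades semigroup convergence to convergence of invariant measures. You instead exploit compactness of $\T^d$ (so tightness is free), pass to a subsequential weak limit $\mu_*$, and identify $\mu_*$ through the Echeverr\'ia criterion, using the well-posedness of the martingale problem for $\tilde\A$ from Theorem \ref{heat_kernel} and the uniqueness of the invariant measure from Proposition \ref{exponential-ergodicity}. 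Your route is arguably more self-contained (no external semigroup-approximation lemma), while the paper's route packages the work into two well-known theorems.

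One inaccuracy worth flagging: your claim that for $\alpha\in(1,2)$ ``no single integrable global majorant exists because $\int_{\R^d}|z|^{2-d-\alpha}dz$ diverges at infinity'' is based on using the Taylor bound $|z|^2$ everywhere. That bound is suboptimal for large $|z|$; the correct uniform majorant for $|f(x+z)-f(x)-z\cdot\nabla f(x)|$ is $C\|f\|_2(|z|^2\wedge|z|)$, and the resulting majorant $(|z|^2\wedge|z|)\,|z|^{-(d+\alpha)}$ \emph{is} integrable on all of $\R^d$ precisely because $1<\alpha<2$. This is exactly what the paper uses in \eqref{scaling-diff}, so ordinary dominated convergence applies directly and your $R$-truncation, while perfectly valid, is unnecessary. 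You should either use the sharper majorant or acknowledge that the truncation is a workaround rather than a necessity.
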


\begin{proof}
  By the argument in \cite[Lemma 2.4]{HP08}, we only need to show that $\tilde P_t^\e f\to \tilde P_tf$ in $\C(\T^d)$ as $\e\to 0^+$ for each $f\in\C(\T^d)$ and $t\ge0$.

  By Proposition \ref{heat_kernel}, we know that $\C^\infty(\T^d)$ is a core for each $\tilde\A^\e$, $\e\ge0$. Now fix an arbitrary $f\in\C^\infty(\T^d)$. If $\alpha\in(0,1]$, then $\|\tilde\A^\e f-\tilde\A f\|_\infty$ as $\e\to0^+$ by dominated convergence and \eqref{ez}. For the case $\alpha\in(1,2)$, we use the following fact from Taylor expansion
  \begin{equation*}
    |f( x+z)-f(x) - z\cdot\nabla f(x)| \le \frac{1}{2} \|f\|_{\C^2} |z|^2 \ind_{\{|z|\le1\}} + 2\|f\|_{\C^1} |z| \ind_{\{|z|>1\}} \le 2\|f\|_{\C^2} (|z|^2 \wedge |z|)
  \end{equation*}
  to derive
  \begin{equation}\label{scaling-diff}
    \begin{split}
       \| \tilde\A^\e f-\tilde\A f \|_\infty \le&\ \| \e^{\alpha-1}c \cdot \nabla f \|_\infty + \sup_{x\in\T^d} \left| \int_{\Ro d} z\cdot\nabla f(x) \left(1-\ind_{B}(\e z)\right) \kappa(x,\e z,z)J(z) dz \right| \\
         & + \sup_{x\in\T^d} \left|\int_{\Ro d} \left[ f( x+z)-f(x) - z\cdot\nabla f(x) \right] (\kappa(x,\e z,z)-\tilde\kappa(x,z))J(z)dz\right| \\
         \le&\ \e^{\alpha-1}\|c\|_\infty \|f\|_{\C^1} + \kappa_2 j_2\|f\|_{\C^1} \int_{|z|\ge 1/\e} \frac{dz}{|z|^{d+\alpha-1}} \\
         & + 2 j_2\|f\|_{\C^2} \int_{\Ro d} \left( \sup_{x\in\T^d} |\kappa(x,\e z,z)-\tilde\kappa(x,z)| \right) (|z|^2\wedge|z|) \frac{dz}{|z|^{d+\alpha}},
    \end{split}
  \end{equation}
  which converges to zero as $\e\to0^+$, by \eqref{ez} and dominated convergence. Now by the Trotter-Kato approximation theorem (see \cite[Theorem III.4.8]{EN00}), $\tilde P_t^\e f\to \tilde P_tf$ in $\C(\T^d)$ as $\e\to 0^+$ for all $f\in\C(\T^d)$, uniformly for $t$ in compact intervals.
\end{proof}

Now using Proposition \ref{exponential-ergodicity} and Lemma \ref{invatiant-meausre}, we can obtain a useful functional convergence theorem.
\begin{corollary}\label{ergodic}
  Let $f$ be a bounded Borel function on $\T^d$.
  Then for every $t>0$,
  \begin{equation*}
    \E \left[ \left| \int_0^t f\left( \frac{X_s^\e}{\epsilon} \right) ds - t\int_{\T^d}f(y)\mu(dy) \right|^2\right] \to 0, \quad\text{as } \epsilon\to 0^+.
  \end{equation*}
  For every $T>0$,
  \begin{equation}\label{est-1}
    \sup_{t\in[0,T]}\left| \int_0^t f\left( \frac{X_s^\e}{\epsilon} \right) ds - t\int_{\T^d}f(y)\mu(dy) \right|\to 0,\quad \text{ in probability } \P,\text{ as } \epsilon\to 0^+.
  \end{equation}
\end{corollary}
\begin{proof}
  We follow the lines of \cite[Proposition 2.4]{Par99}. Fix $\e>0$. Let $\bar f_\e:= f-\int_{\T^d}f(y)\mu_\e(dy)$. 
  By virtue of Lemma \ref{rescale} and Lemma \ref{invatiant-meausre}, to prove the two limits, it suffices to prove that
  \begin{equation}\label{est-6}
    \e^\alpha\int_0^{\e^{-\alpha}t} \bar f_\e(\tilde X^\e_s) ds = \int_0^t \bar f_\e(X^\e_s/\e) ds \to 0,
  \end{equation}
  in $L^2(\Omega,\P)$ and also in probability uniformly in $t\in[0,T]$.
  By Proposition \ref{exponential-ergodicity}, for $0\le s<t$ we have
  \begin{equation}\label{est-4}
    \E \left( \bar f_\e(\tilde X^{\e}_t) \Big| \tilde X^{\e}_s\right) = \int_{\T^d} \bar f_\e(y) \left[\tilde p^\e(t-s,\tilde X_s^{\e},y)dy-\mu_\e(dy)\right] \le C\|\bar f_\e\|_\infty e^{-\rho(t-s)},
  \end{equation}
  and then by the Markov property,
  \begin{equation}\label{est-5}
    \E\left( \bar f_\e(\tilde X^\e_s) \bar f_\e(\tilde X^\e_t) \right) = \E\left[\bar f_\e(\tilde X^\e_s) \E \left(\bar f_\e(\tilde X^{\e}_t)\Big| \tilde X^{\e}_s\right)\right] \le C\|\bar f_\e\|_\infty^2 e^{-\rho(t-s)} \le 4C\|f\|_\infty^2 e^{-\rho(t-s)}.
  \end{equation}
  Hence, if we denote $g_\e(s):=\bar f_\e(X^\e_s/\e)$, then as $\e\to0^+$,
  \begin{equation}\label{est-2}
    \begin{split}
       \E\left[\left| \int_0^t g_\e(s) ds \right|^2\right] &= 2\e^{2\alpha}\int_0^{\e^{-\alpha}t} \int_0^{r} \E \left( \bar f_\e(\tilde X^\e_s) \bar f_\e(\tilde X^\e_r) \right) ds dr \\
         &\le 8C\e^{2\alpha} \|f\|_\infty^2 \int_0^{\e^{-\alpha}t} \int_0^{r} e^{-\rho(r-s)} dsdr \\
         &= 8C\e^{2\alpha} \|f\|_\infty^2 \rho^{-2} \left[ -1+\rho\e^{-\alpha}t + e^{-\rho\e^{-\alpha}t} \right] \\
         &\to 0,
    \end{split}
  \end{equation}
 The first result follows. On the other hand, for any $n\in\N_+$, since $(\lfloor \frac{nt}{T} \rfloor +1) \frac{T}{n}\ge t$,
  \begin{equation*}
    \begin{split}
       \E\left[ \sup_{t\in[0,T]} \left| \int_0^t g_\e(s) ds \right|^2\right] &= \E\left[ \sup_{k=0,\cdots,n} \left| \int_0^{k\frac{T}{n}} g_\e(s) ds \right|^2 + \sup_{t\in[0,T]} \left( \left| \int_0^{\lfloor \frac{nt}{T} \rfloor \frac{T}{n}} g_\e(s) ds \right| + \left| \int_{\lfloor \frac{nt}{T} \rfloor \frac{T}{n}}^t g_\e(s) ds \right| \right)^2 \right] \\
         &\le 3 \E\left[ \sup_{k=0,\cdots,n} \left| \int_0^{k\frac{T}{n}} g_\e(s) ds \right|^2 \right] + 2 \E\left[ \sup_{t\in[0,T]} \left| \int_{\lfloor \frac{nt}{T} \rfloor \frac{T}{n}}^t g_\e(s) ds \right|^2 \right] \\
         &\le 3 \sup_{k=0,\cdots,n} \E\left[ \left| \int_0^{k\frac{T}{n}} g_\e(s) ds \right|^2 \right] + 8 \|f\|_\infty^2 \frac{T^2}{n^2},
    \end{split}
  \end{equation*}
  which goes to zero, by first letting $\e\to0^+$ and applying \eqref{est-2}, and then letting $n\to\infty$. The second result follows by an application of Chebyshev's inequality.
\end{proof}

\begin{remark}\label{remark-2}
  (i). From \eqref{est-6}, we have indeed proved the following ergodicity result: for every $\e\in(0,1]$ and bounded Borel function $f$ on $\T^d$,
  $$\frac{1}{T} \int_0^T f(\tilde X^\e_s) ds \to \int_{\T^d}f(y)\mu_\e(dy), \quad \text{as } T\to\infty, \text{ in } L^2(\Omega,\P).$$
  This result also holds for $\e=0$, since Proposition \ref{exponential-ergodicity} and thereby \eqref{est-4} and \eqref{est-5} are all valid for $\e=0$, we have in a similar way as \eqref{est-2} that, denoting $\bar f:= f-\int_{\T^d}f(y)\mu(dy)$,
  \begin{equation*}
    \E\left[\left| \frac{1}{T}\int_0^T \bar f(\tilde X_s) ds \right|^2\right] = \frac{2}{T^2}\int_0^T \int_0^{r} \E \left( \bar f_\e(\tilde X^\e_s) \bar f_\e(\tilde X^\e_r) \right) ds dr \to 0, \quad \text{as } T\to\infty.
  \end{equation*}

  (ii). In the sequel, we shall use the following variant of \eqref{est-1}: Let $f:\T^d\times \R_+\times\R^d\to\R$ be a bounded Borel function, then for every $T>0$, as $\epsilon\to 0^+$,
  \begin{equation}\label{est-3}
    \sup_{t\in[0,T]}\left| \int_0^t f\left( \frac{X_s^\e}{\epsilon}, \e, z \right) ds - t\int_{\T^d}f(y, \e, z)\mu(dy) \right|\to 0,\quad \text{ in probability } \P.
  \end{equation}
  Clearly, this holds for the case that $f$ is separable as $f(y, \e, z) = f_0(y)g(\e,z)$. The general case follows by first making monotone approximations for positive and negative parts of $f$ by simple functions (linear combinations of indicator functions) respectively, and then applying the monotone convergence theorem.

  (iii). The proof of a similar result in another paper from the same authors, \cite[Proposition 4.8]{HDS22}, is partially incorrect, even though it does not affect the main results therein. The correct proof needs to be conducted in the same way as here.
\end{remark}

\subsection{Nonlocal Poisson equation}
Using the exponential ergodicity, we can also obtain the well-posedness of the nonlocal Poisson equation. Denote by $\C^\gamma_\mu(\T^d)$, $\gamma>0$, the class of all $f\in\C^\gamma(\T^d)$ which are \emph{centered} with respect to the invariant measure $\mu$ in the sense that $\int_{\T^d}f(x)\mu(dx)=0$. It is easy to check that $\C_\mu(\T^d)$ is a closed subset, and hence a sub-Banach space of $\C(\T^d)$ under the norm $\|\cdot\|_\infty$.

\begin{lemma}
  The restrictions $\{\tilde P_t^\mu := \tilde P_t|_{\C_\mu(\T^d)}\}_{t\ge0}$ form a $C_0$-semigroup on the Banach space $(\C_\mu(\T^d),\|\cdot\|_\infty)$, with generator $(\tilde\A_\mu,D(\tilde\A_\mu)):= \overline{(\tilde\A, \C^\infty_\mu(\T^d))}$. Moreover, the set $\{z\in\Cp\mid\mathrm{Re}z>-\rho\}$ is contained in the resolvent set of $\tilde \A_\mu$.
\end{lemma}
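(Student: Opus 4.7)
The plan is to assemble the statement from three ingredients already available: invariance of $\mu$ under $\tilde P_t$, the core statement for $\tilde\A$ on $\C^\infty(\T^d)$ supplied by Theorem \ref{heat_kernel}, and the exponential decay on centered functions in Proposition \ref{exponential-ergodicity}. I will interpret $\rho$ as the exponential rate $C_2$ from that proposition.

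First, I would verify that $\C_\mu(\T^d)$ is $\tilde P_t$-invariant: for $f\in\C_\mu(\T^d)$, invariance of $\mu$ gives $\int_{\T^d}\tilde P_t f\, d\mu=\int_{\T^d}f\, d\mu=0$, so $\tilde P_t f\in\C_\mu(\T^d)$. The semigroup law and strong continuity are inherited from $\tilde P_t$ on $(\C(\T^d),\|\cdot\|_0)$, whence $\{\tilde P_t^\mu\}_{t\ge0}$ is a $C_0$-semigroup on the closed subspace $\C_\mu(\T^d)$.

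Next, I would identify its generator. The generator of a restricted semigroup is the part of the ambient generator in the subspace, so $\tilde\A_\mu f=\tilde\A f$ with $D(\tilde\A_\mu)=\{f\in D(\tilde\A)\cap\C_\mu(\T^d):\tilde\A f\in\C_\mu(\T^d)\}$. The constraint $\tilde\A f\in\C_\mu(\T^d)$ is automatic: differentiating $t\mapsto\int \tilde P_t f\, d\mu$ (which is constant in $t$) at $t=0$ yields $\int\tilde\A f\, d\mu=0$. To prove $\C^\infty_\mu(\T^d)$ is a core, take $f\in D(\tilde\A_\mu)$; Theorem \ref{heat_kernel} provides $f_n\in\C^\infty(\T^d)$ with $f_n\to f$ and $\tilde\A f_n\to\tilde\A f$ in $\|\cdot\|_0$. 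Centering, set $\hat f_n:=f_n-\int f_n\, d\mu\in\C^\infty_\mu(\T^d)$. Since $\int f_n\, d\mu\to\int f\, d\mu=0$, we have $\hat f_n\to f$; and because $\tilde\A$ annihilates constants (both the non-local integrand vanishes for constant $f$ and $\nabla f=0$), $\tilde\A\hat f_n=\tilde\A f_n\to\tilde\A f$. Hence $(\tilde\A_\mu,D(\tilde\A_\mu))=\overline{(\tilde\A,\C^\infty_\mu(\T^d))}$.

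Finally, for the resolvent set I would restrict Proposition \ref{exponential-ergodicity} to $f\in\C_\mu(\T^d)$: since $\int f\, d\mu=0$, the estimate collapses to $\|\tilde P_t^\mu f\|_0\le C_1\|f\|_0 e^{-C_2 t}$. Thus the growth bound of $\{\tilde P_t^\mu\}$ is at most $-C_2=-\rho$, so the Laplace transform
\begin{equation*}
R(\lambda,\tilde\A_\mu)f=\int_0^\infty e^{-\lambda t}\tilde P_t^\mu f\, dt
\end{equation*}
converges absolutely and defines the resolvent for every $\lambda\in\Cp$ with $\mathrm{Re}\lambda>-\rho$, as desired. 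The only mildly delicate point is the core argument, which hinges on the fact that $\tilde\A$ kills constants; once that is noted, the rest of the proof is a routine restriction argument for $C_0$-semigroups.
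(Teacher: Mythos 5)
Your proof is correct and follows the same overall route as the paper: check that $\C_\mu(\T^d)$ is closed and $\tilde P_t$-invariant (by invariance of $\mu$), restrict to obtain a $C_0$-semigroup whose generator is the part of $\tilde\A$ in $\C_\mu(\T^d)$, and then read off the resolvent half-plane from the exponential decay bound of Proposition~\ref{exponential-ergodicity} via the Laplace-transform/growth-bound relation. The one place you go further than the paper is the identification of the generator as $\overline{(\tilde\A,\C^\infty_\mu(\T^d))}$: the paper simply cites the corollary in [EN00, Subsection~II.2.3], whereas you supply an explicit core argument, approximating $f\in D(\tilde\A_\mu)$ by $f_n\in\C^\infty(\T^d)$, centering them, and observing that $\tilde\A$ annihilates constants so the graph-norm convergence survives the centering. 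This is a useful extra detail, since the Engel--Nagel corollary in its standard form requires the candidate core to be semigroup-invariant, and $\C^\infty_\mu(\T^d)$ is not obviously invariant under $\tilde P_t$; your argument sidesteps that point cleanly. The rest (including reading $\rho=C_2$ from Proposition~\ref{exponential-ergodicity} and noting that the centering term in that bound vanishes for $f\in\C_\mu$) is exactly as in the paper.
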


\begin{proof}
  Since $\mu$ is invariant with respect to $\{\tilde P_t\}_{t\ge0}$, it is easy to see that $\C_\mu(\T^d)$ is $\{\tilde P_t\}_{t\ge0}$-invariant, in the sense that $\tilde P_t(\C_\mu(\T^d))\subset\C_\mu(\T^d)$ for all $t\ge0$. The first part of the lemma then follows from the corollary in \cite[Subsection II.2.3]{EN00}. By the exponential ergodicity result in Proposition \ref{exponential-ergodicity}, we have
  \begin{equation}\label{P_per}
    \|\tilde P_t^\mu f\|_\infty\le C\|f\|_\infty e^{-\rho t}
  \end{equation}
  for all $f\in\C_\mu(\T^d)$ and $t\ge0$. This yields the second part of the lemma, using \cite[Theorem II.1.10.(ii)]{EN00}.
\end{proof}

\begin{corollary}\label{Poisson}
  Let $\alpha\in(1,2)$. For every $f\in\C^\beta_\mu(\T^d)$, there exists a unique solution in $\C^{\alpha+\beta}_\mu(\T^d)$ to the Poisson equation
  \begin{equation}\label{Poisson-00}
    \tilde\A u+f = 0.
  \end{equation}
\end{corollary}

\begin{proof}
  If $u\in \C^{\alpha+\beta}_\mu(\T^d)$ is a solution, then by \eqref{P_per},
  \begin{equation*}
    \int_0^\infty \tilde P^\mu_t f dt = - \int_0^\infty \tilde P^\mu_t \tilde\A_\mu u\, dt = - \int_0^\infty \frac{d}{dt} \tilde P^\mu_t u\, dt = u - \lim_{t\to\infty}\tilde P^\mu_t u = u.
  \end{equation*}
  This yields the uniqueness. Thanks to Corollary \ref{Poisson-zero}, the existence follows from a standard argument of Fredholm alternative (\cite[Section 5.3]{GT01}).
\end{proof}

According to the terminology of periodic homogenization, we will refer to equation \eqref{Poisson-00} as the \emph{cell problem}.

\section{Homogenization result}\label{main}

In this section we will prove our homogenization result. Before that, some preparations are needed.
%

Firstly, we need a convergence lemma for locally periodic function.

\begin{lemma}\label{convergence-periodic}
  Let $\phi:\R^d\times\R^d\to\R, (x,y)\mapsto \phi(x,y)$ be a function periodic in $y$ with period 1. \\
  (i). Let $1<p<\infty$. Suppose for each $x\in\R^d$, $\phi(x,\cdot) \in L^p([0,1]^d)$, and for each $y\in\R^d$, $\phi(\cdot,y) \in L_{\text{loc}}^{p'}(\R^d)$, where $p'$ is the conjugate of $p$, i.e., $\frac{1}{p}+\frac{1}{p'}=1$. Then for every compact set $K\subset\R^d$, we have
  $$\lim_{\e\to 0^+} \int_K \phi \left(x, \frac{x}{\e} \right) dx = \int_K \int_{\T^d}\phi(x,y)dy dx.$$
  (ii). Suppose for each $x\in\R^d$, $\phi(x,\cdot) \in L^\infty([0,1]^d)$, and for each $y\in\R^d$, $\phi(\cdot,y) \in L^1(\R^d)$. Then we have
  $$\lim_{\e\to 0^+} \int_{\R^d} \phi \left(x, \frac{x}{\e} \right) dx = \int_{\R^d} \int_{\T^d}\phi(x,y)dy dx.$$
\end{lemma}

In the case that the function $\phi$ is separable, that is, $\phi$ is of the form $\phi(x,y) = f(x) g(y)$ with $g$ periodic, the conclusions of the above lemma can be found in \cite[Theorem 2.6]{CD99}. 
The general case can be achieved via standard 
monotone approximations of positive and negative parts of $\phi$ by simple functions and the monotone convergence theorem.

%

Now we are in the position to prove the homogenization result. To get rid of the singularity in the coefficient $\frac{1}{\e^{\alpha-1}}b$ in the case $\alpha\in(1,2)$, we need one more assumption on $b$.

\begin{assumption}\label{center}
  The function $b$ satisfies the \emph{centering condition},
  \begin{equation*}
    \int_{\T^d}b(x)\mu(dx)=0.
  \end{equation*}
\end{assumption}
By virtue of Assumptions \ref{bc}, \ref{center} and Corollary \ref{Poisson}, when $\alpha\in(1,2)$ there exists a function $\hat b\in \C^{\alpha+\beta}_\mu(\T^d)$ that uniquely solves the Poisson equation
\begin{equation}\label{b-hat}
  \tilde\A \hat b+b = 0.
\end{equation}

\begin{theorem}\label{homogenization}
  Suppose that Assumptions \ref{bc}--\ref{center} hold. In the sense of weak convergence on the space $\D(\R_+;\R^d)$, we have
  $$X^\e \ \Rightarrow \bar X, \quad \text{as } \e\to 0^+.$$
  The limit process $\bar X$ is a L\'evy process starting from 0 with L\'evy triplet $(\bar b,0,\bar \nu)$ given by
  \begin{equation}\label{homogenized}\left\{
    \begin{array}{l}
      {\displaystyle \bar b = \ind_{(0,1)}(\alpha) \int_{\Bo} \bar\kappa(z) zJ(z) dz+ \ind_{(1,2)}(\alpha)\bar c,} \\
      {\displaystyle\bar\nu(dz) = \bar\kappa(z) J(z) dz,}
      \end{array} \right.
  \end{equation}
  with homogenized coefficients
  \begin{gather*}
    \bar\kappa(z): = \int_{\T^d} \int_{\T^d} \kappa_0(x,z,u) du \mu(dx), \\
    \bar c := \int_{\T^d}\left(I+\nabla \hat b(x)\right)\cdot c(x)\mu(dx) + \int_{B^c}z \cdot\left( \int_{\T^d} \int_{\T^d} \nabla \hat b(x) \kappa_0(x,z,u) du \mu(dx) \right) J(z) dz,
  \end{gather*}
  where $\mu$ is the invariant probability measure of $\tilde X$ with generator \eqref{A-tilde}, $\hat b$ is uniquely determined by \eqref{b-hat}.
\end{theorem}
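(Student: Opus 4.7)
The plan is to characterise $\bar X$ as the unique solution of the martingale problem for $(\bar\A,\delta_0)$. Because $\bar\A$ has spatially constant coefficients, it generates a unique L\'evy process, so it is enough to show (a) tightness of $\{X^\e\}_{\e>0}$ in the Skorokhod space and (b) that every subsequential weak limit is a solution of the martingale problem for $(\bar\A,\delta_0)$.

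For tightness I would combine Aldous-type criteria with compact containment obtained from moment control of the large-jump part (via the uniform upper bound on $\kappa$ and on $J$) and the heat kernel estimates of Theorem \ref{heat_kernel}. The delicate input is the singular drift $\e^{1-\alpha}b(\cdot/\e)$ appearing when $\alpha\in(1,2)$. To absorb it, I would work with the perturbed test function
\begin{equation*}
  f^\e(x) := f(x) + \e\,\hat b(x/\e)\cdot\nabla f(x)\,\ind_{(1,2)}(\alpha),\qquad f\in\C_c^\infty(\R^d),
\end{equation*}
where $\hat b\in\C^{\alpha+\beta}_\mu(\T^d)$ is the corrector supplied by \eqref{b-hat}. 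Using the scaling identity $\A^\e[\phi(\cdot/\e)](x)=\e^{-\alpha}\tilde\A^\e\phi(x/\e)$ (valid for all $\alpha\in(0,2)$, with \eqref{alpha=1} absorbing the borderline case) together with the strong convergence $\tilde\A^\e\hat b\to\tilde\A\hat b=-b$ already established in Lemma \ref{invatiant-meausre}, the singular piece $\e^{1-\alpha}b(x/\e)\cdot\nabla f(x)$ in $\A^\e f^\e$ is cancelled at leading order by the action of the nonlocal operator on the corrector, so that $\|\A^\e f^\e\|_0$ becomes uniformly bounded in $\e$.

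For the identification step I would pass to the limit in the pre-limit martingale relation
\begin{equation*}
  \E\bigl[(M^{\e,f^\e}_t-M^{\e,f^\e}_s)\,\Phi(X^\e_{u_1},\ldots,X^\e_{u_n})\bigr]=0,\qquad 0\le u_1<\cdots<u_n\le s<t,
\end{equation*}
for $\Phi$ bounded continuous. Three averaging mechanisms enter the jump integral $\int[f(x+z)-f(x)-\cdots]\kappa(x/\e,z,z/\e)J(z)\,dz$: the fast slot $v=z/\e$ of $\kappa^*(\cdot,\cdot,\cdot,z/\e)$ is replaced by its pointwise limit $\kappa_0(x/\e,z,z/\e)$ via \eqref{kappa_0}; the third slot $u=z/\e$, periodic in $u$, is averaged under the $z$-integral by Lemma \ref{convergence-periodic}(ii); and the spatial fast variable $X^\e_s/\e$ is averaged in $s$ by the ergodic theorem Corollary \ref{ergodic} with respect to the invariant measure $\mu$ of $\tilde X$. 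These three averages combine to give the homogenised kernel $\bar\kappa(z)=\iint\kappa_0(x,z,u)\,du\,\mu(dx)$. The same averaging, applied to the direct contribution of $c(x/\e)\cdot\nabla f$ and to the cross terms generated by the corrector, produces both components of $\bar c$: the ``local'' piece $\int(I+\nabla\hat b)c\,d\mu$ from the drift and its interaction with $\nabla\hat b$, and the ``non-local'' piece $\int_{B^c}z\bigl(\iint\nabla\hat b\,\kappa_0\,du\,\mu(dx)\bigr)J(z)\,dz$ coming from the non-compensated large jumps of $\hat b(\cdot/\e)$.

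The main technical obstacle is the rigorous control of the nonlocal cross terms produced when $\A^\e$ is applied to the product $\hat b(x/\e)\cdot\nabla f(x)$, since the nonlocal generator obeys no product rule. I would split the jump integral into a small-jump region, where a Taylor expansion of $\nabla f$ around $x$ together with the H\"older regularity of $\hat b$ and the refined assumption \eqref{ez-1} produces terms of order $o(1)$ as $\e\to 0$, and a large-jump region, whose contribution is uniformly bounded and, after the time-averaging provided by Corollary \ref{ergodic}, generates precisely the large-jump piece of $\bar c$. Once every error term is shown to converge to $0$ in $L^1(\P)$ uniformly for $t$ in compact intervals, the martingale problem for $(\bar\A,\delta_0)$ is verified in the limit, and well-posedness for that martingale problem (standard for L\'evy generators) combined with Prokhorov's theorem yields $X^\e\Rightarrow\bar X$.
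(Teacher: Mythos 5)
Your plan is conceptually sound, but it takes a genuinely different route from the paper's, and it is worth flagging where the two diverge. The paper does not establish tightness separately: in part (i) (the case $b\equiv 0$) it reads off the semimartingale characteristics $(B^\e,0,\nu^\e)$ of $X^\e$ from \cite[Theorem 2.44]{BSW13} and then invokes the functional central limit theorem of Jacod--Shiryaev \cite[Theorem VIII.2.17]{JS13}, which converts convergence in probability of the characteristics (locally uniformly in $t$) directly into weak convergence on $\D$; tightness comes for free because the limiting characteristics are deterministic. Your Aldous-criterion route can certainly be made to work, but it adds a layer you would need to fill in carefully, since for $\alpha<2$ the jump part has no second moment and the compact containment you sketch via ``moment control'' needs a truncation argument rather than a direct moment bound.

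More substantively, in part (ii) the paper uses a \emph{perturbed process}, not a perturbed test function: it sets $\hat X^\e_t := X^\e_t + \e\,\hat b_\e(X^\e_t)$, applies the generalised It\^o formula (Lemma \ref{Ito}) to the function $\e\hat b_\e$, and uses the clean scaling identity \eqref{rescaling} applied to $\hat b$ \emph{alone} to produce the cancellation $\A^\e[\e\hat b_\e](x)=\e^{1-\alpha}\tilde\A^\e\hat b(x/\e)\approx-\e^{1-\alpha}b(x/\e)$. This sidesteps entirely the product-rule obstacle you correctly identify: $\A^\e$ never acts on a product of a fast function and a slow function. In your perturbed test function $f^\e(x)=f(x)+\e\hat b(x/\e)\cdot\nabla f(x)$, the scaling identity you quote applies to $\phi(\cdot/\e)$ only, so you cannot invoke it directly to cancel the singular drift, and the required small-jump Taylor expansion plus large-jump splitting is real work that you have only sketched. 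It can be done (this is the classical Kurtz/Papanicolaou perturbed-test-function programme), and it has the merit of proving the martingale problem characterisation directly, but it is strictly more technical than the paper's corrected-process plus It\^o decomposition (the terms $I_1^\e,\dots,I_5^\e$), where each term is controlled by the ergodic theorem (Corollary \ref{ergodic}), Lemma \ref{convergence-periodic}, \eqref{ez-1}, or an It\^o-isometry estimate. In short: same corrector $\hat b$, same averaging ingredients $\kappa_0$, $\mu$, Lemma \ref{convergence-periodic}, Corollary \ref{ergodic}, but you push the corrector into test functions while the paper pushes it into the process; the paper's choice is what makes the scaling identity \eqref{rescaling} usable as stated and keeps the error terms tractable.
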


\begin{proof}
  (i). We first prove the theorem for the case that $b\equiv0$ or $\alpha\in(0,1]$. By \cite[Theorem 2.44]{BSW13}, we know that the semimartingale characteristics of $X^\e$ relative to the truncation function $\ind_B$ are $(B^\e,0,\nu^\e)$, where
  \begin{equation*}\left\{
    \begin{array}{l}
      {\displaystyle B^\e_t = \ind_{(0,1)}(\alpha) \int_0^t\int_{\Bo}z\kappa^*\left(\DivEps{X^\e_s},z,\DivEps z,\DivEps z\right)J(z)dzds, + \ind_{(1,2)}(\alpha) \int_0^t c\left( \DivEps{X^\e_s} \right)ds } \\
      {\displaystyle \nu^\e(dz,dt) = \kappa^*\left(\DivEps{X^\e_t},z,\DivEps z,\DivEps z\right)J(z)dzdt.}
    \end{array} \right.
  \end{equation*}
  By applying the functional central limit theorem in \cite[Theorem VIII.2.17]{JS87}, we only need to show that for all $t\in\R_+$ and every bounded continuous function $f:\R^d\to\R$ vanishing in a neighbourhood of the origin, the following convergences hold in probability when $\e\to0^+$:
  \begin{align}
    \sup_{0\le s\le t} |B^\e_s-\bar bs| &\to 0, \label{B-conv}\\
    \int_0^t\int_{\Ro d}f(z)\nu^\e(dz,ds) &\to t\int_{\Ro d}f(z)\bar \nu(dz). \label{nu-conv}
  \end{align}
  Clearly, by Corollary \ref{ergodic} we have
  \begin{equation}\label{c-conv}
    \int_0^t c\left( \DivEps{X^\e_s} \right)ds \to t\int_{\T^d}c(x)\mu(dx),\quad \text{in probability}, \text{ as } \epsilon\to 0^+.
  \end{equation}
  When $\alpha\in(0,1)$, we also have the following convergence in probability, uniformly with respect to $t$ in closed intervals,
  \begin{align*}
    &\ \int_0^t\int_{\Bo}z\kappa^*\left( \frac{X^{\e}_s}{\e},z, \frac{z}{\e},\frac{z}{\e} \right)J(z)dzds \\
    \widesim{\e\ll 1} &\ t \int_{\Bo}\left[ \int_{\T^d} \kappa^*\left( x,z, \frac{z}{\e},\frac{z}{\e} \right) \mu(dx) \right] zJ(z) dz \qquad (\text{by \eqref{est-3}}) \\
    \widesim{\e\ll 1} &\ t \int_{\Bo}\left[ \int_{\T^d} \kappa_0\left( x,z, \frac{z}{\e} \right) \mu(dx) \right] zJ(z) dz \qquad (\text{by \eqref{kappa_0} and dominated convergence}) \\
    \xrightarrow{\e\to0^+} &\ t \int_{\Bo}\left[ \int_{\T^d} \int_{\T^d} \kappa_0\left( x,z, u \right) du \mu(dx) \right] zJ(z) dz \qquad (\text{by Lemma \ref{convergence-periodic}.(i)}).
  \end{align*}
  In the second line, to apply \eqref{est-3} we take $f(y,\e,z) = \kappa^*(y,z,z/\e,z/\e)$. In the last line, to apply Lemma \ref{convergence-periodic}.(i) we take $K=B$, and $\phi(z,u)=\kappa_0(x,z,u)zJ(z)$ for fixed $x$. Choose $p'\in(1,\frac{d}{d+\alpha-1})$, then it is easy to verify from \eqref{non-degenerate} and \eqref{j} that for each $u$, $\phi(\cdot,u)\in L^{p'}(K)$, and for each $z$, $\phi(z,\cdot)\in L^p([0,1]^d)$. This proves the assertion \eqref{B-conv}. The assertion \eqref{nu-conv} follows in a similar fashion but with Lemma \ref{convergence-periodic}.(ii) in place of Lemma \ref{convergence-periodic}.(i) and letting $\phi(z,u)=\kappa_0(x,z,u)f(z)J(z)$.

  (ii). We prove the general case that $b\neq0$ and $\alpha\in(1,2)$.
  Define $\hat X_t^{\epsilon}:=X_t^{\epsilon}+\epsilon \hat b_\e \left(X_t^{\epsilon}\right)$, the boundedness of $\hat b$ yields that $\hat X^\e$ and $X^\e$ have the same limit. Applying Corollary \ref{SDE-weak}, Lemma \ref{Ito} and \eqref{rescaling}, we have
  \begin{equation*}
    \begin{split}
      \hat X_t^{\epsilon} = &\ \int_0^t c\left(\frac{X_{s}^{\e}}{\e}\right) ds + \int_0^t \frac{1}{\e^{\alpha-1}}\left( \tilde\A^\e \hat b - \tilde\A \hat b \right) \left(\frac{X_{s}^{\epsilon}}{\epsilon}\right) ds \\
        &\ +\int_0^t \int_0^\infty \int_{\Ro d}\epsilon\left[\hat b_\epsilon\left(X_{s-}^{\epsilon}+ \ind_{[0,\kappa(X_{s-}^\e/\e,z,z/\e))}(r)z \right)-\hat b_\epsilon\left(X_{s-}^{\epsilon}\right) \right] \tilde N(dz,dr,ds) \\
        &\ +\int_0^t \int_0^\infty \int_{\Bo} \ind_{[0,\kappa(X_{s-}^\e/\e,z,z/\e))}(r)z \tilde N(dz,dr,ds) \\
        &\ +\int_0^t \int_0^\infty \int_{B^c} \ind_{[0,\kappa(X_{s-}^\e/\e,z,z/\e))}(r)z N(dz,dr,ds) \\
        =: &\ I_1^\e(t)+I_2^\e(t)+I_3^\e(t)+I_4^\e(t)+I_5^\e(t),
    \end{split}
  \end{equation*}
  where $N$ is a Poisson random measure on $\R^d\times[0,\infty)\times[0,\infty)$ with intensity measure $J(z)dz\times m\times m$ and $\tilde N$ is the associated compensated Poisson random measure. The convergence of $I_1^\e$ is shown in \eqref{c-conv}. For $I_2^\e$, we derive in a similar way as \eqref{scaling-diff},
  \begin{equation*}
    \begin{split}
      &\ \frac{1}{\e^{\alpha-1}}\left( \tilde\A^\e \hat b - \tilde\A \hat b \right)(x/\e) \\
      =&\ \frac{1}{\e^{\alpha-1}} \int_{\Ro d} \left[ \hat b( x/\e+z)-\hat b(x/\e) - z\cdot\nabla \hat b(x/\e) \right] (\kappa(x/\e,\e z,z)-\tilde\kappa(x/\e, z))J(z)dz \\
       &\ + \left(c(x/\e) + \int_{B^c}z\kappa(x/\e,z,z/\e)J(z)dz\right) \cdot \nabla \hat b(x/\e) \\
      =: &\ II_1(x/\e) + II_2(x/\e).
    \end{split}
  \end{equation*}
  Denote $\gamma=(\alpha+\beta)\wedge 2>\alpha$. Since $\hat b\in \C^{\alpha+\beta}_\mu(\T^d)$, we apply Taylor expansion to get that for all $x\in\T^d$,
  \begin{equation*}
    \begin{split}
      \left| \hat b( x+z)-\hat b(x) - z\cdot\nabla \hat b(x) \right| &\le |z| \int_0^1 \left|\nabla \hat b(x+rz)-\nabla \hat b(x) \right|dr \ind_{\{|z|\le1\}} + 2\|\hat b\|_{\C^1} |z| \ind_{\{|z|>1\}} \\
      &\le \frac{1}{\gamma} \|\hat b\|_{\C^\gamma} |z|^{\gamma}\ind_{\{|z|\le1\}} + 2\|\hat b\|_{\C^1} |z| \ind_{\{|z|>1\}} \\
      &\le 2\|\hat b\|_{\C^\gamma} (|z|^{\gamma} \wedge |z|).
    \end{split}
  \end{equation*}
  Then applying assumption \eqref{ez-1} and dominated convergence, we estimate $II_1$ as follows,
  \begin{equation*}
    |II_1(X_{s}^{\epsilon}/\e)| \le \sup_{x\in\T^d} |II_1(x/\e)| \le 2j_2\|\hat b\|_{\C^\gamma} \int_{\Ro d} \left(\frac{1}{\e^{\alpha-1}} \sup_{x\in\T^d} |\kappa(x,\e z,z)-\tilde\kappa(x, z)| \right) (|z|^{\gamma} \wedge |z|) \frac{dz}{|z|^{d+\alpha}} \xrightarrow{\e\to0^+} 0.
  \end{equation*}
  Using the same argument as the proof of \eqref{B-conv}, we have the following locally uniform convergence in $t$ in probability, as $\e\to0^+$,
  \begin{equation*}
    \begin{split}
       I_2^\e(t) &\sim \int_0^t II_2 \left(\frac{X_{s}^{\epsilon}}{\epsilon}\right) ds = \int_0^t \left\{ \left[ c\left(\frac{X_{s}^{\e}}{\e}\right) + \int_{B^c}z \kappa\left( \frac{X^{\e}_s}{\e},z, \frac{z}{\e} \right)J(z)dz\right] \cdot \nabla \hat b\left(\frac{X_{s}^{\e}}{\e}\right) \right\} ds \\
       &\to t \left[ \int_{\T^d}c(x)\cdot \nabla \hat b(x) \mu(dx) + \int_{B^c}z \cdot\left( \int_{\T^d} \int_{\T^d} \nabla \hat b(x) \kappa_0(x,z,u) du \mu(dx) \right) J(z) dz \right].
    \end{split}
  \end{equation*}
  For $I_3^\e$, we use It\^o's isometry to get
  \begin{equation*}
    \begin{split}
       \E (|I_3^\e(t)|^2) &= \E \int_0^t \int_0^\infty \int_{\Ro d}\left| \epsilon\left[\hat b_\epsilon\left(X_{s-}^{\epsilon}+ \ind_{[0,\kappa(X_{s-}^\e/\e,z,z/\e))}(r)z \right)-\hat b_\epsilon\left(X_{s-}^{\epsilon}\right) \right] \right|^2 J(dz)drds \\
         &= \E \int_0^t \int_{\Ro d} \epsilon^2 \left| \hat b_\epsilon\left(X_{s-}^{\epsilon}+ z \right)-\hat b_\epsilon\left(X_{s-}^{\epsilon}\right) \right|^2 \kappa\left( \frac{X^{\e}_{s-}}{\e},z, \frac{z}{\e} \right) J(dz)ds \\
         &\le \kappa_2 j_2 t \left( 4 \|\hat b\|_\infty^2 \e^2 \int_{B_\e^c} \frac{dz}{|z|^{d+\alpha}} + \|\hat b\|_{\C^1}^2\int_{B_\e\setminus\{0\}}|z|^2 \frac{dz}{|z|^{d+\alpha}} \right) \\
         &= \kappa_2 j_2 t \omega_{d-1} \left( \frac{4 \|\hat b\|_\infty^2}{\alpha} + \frac{\|\hat b\|_{\C^1}^2}{2-\alpha} \right) \e^{2-\alpha},
    \end{split}
  \end{equation*}
  which goes to zero as $\e\to0^+$, where $\omega_{d-1}$ is the surface area of the unit sphere in $\R^d$. This implies that $I_3^\e(t)$ converges to $0$ locally uniformly in $t$ in probability. Since the local uniform topology is stronger than the Skorokhod topology in the space $\D$ (see, for instance, \cite[Proposition VI.1.17]{JS87}), $I_3^\e$ converges to $0$ in Skorokhod topology in probability and thereby in distribution. Further, it is easy to verify that the semimartingale characteristics of $I_4^\e+I_5^\e$ are $(0,0,\nu^\e)$, whose convergence is proved in \eqref{nu-conv}. Combining these convergence together and using the functional central limit theorem again, we get the results.
\end{proof}

\begin{remark}\label{remark-3}
  (i). Note that $\kappa_1\le\bar\kappa(z)\le\kappa_2$ for all $z$, so the homogenized measure $\bar \nu$ is an $\alpha$-stable L\'evy measure.

  (ii). The generator of the limit process $\bar X$, restricted to $\C^\infty(\T^d)$, is
  \begin{equation*}
    \bar\A f(x) = \int_{\Ro d} \left[ f( x+z)-f(x) - z\cdot\nabla f(x) \ind_{[1,2)}(\alpha)\ind_{\{|z|<1\}} \right] \bar\kappa(z) J(z)dz  + \bar c\cdot \nabla f(x) \ind_{(1,2)}(\alpha).
  \end{equation*}

  (iii). Note that the homogenized coefficients $\bar\kappa$ and $\bar c$ both depend on the invariant distribution $\mu$ of the auxiliary process $\tilde X$. Proposition \ref{exponential-ergodicity} tells that $\mu$ can be approximated by large-time distributions of $\tilde X$, with exponentially small error. But in practice this scheme is not efficient, since one needs to generate enormously large number of samples at a large time in order to compute the measure $\mu$ by Monte Carlo method. However, by Remark \ref{remark-2}.(i), we can approximate $\mu$ by the long-time average of a single path of $\tilde X$, due to the ergodicity. Indeed, taking $f=\ind_A$ for some $A\in\B(\T^d)$ we have
  $$\frac{1}{T} \int_0^T \ind_A(\tilde X_s) ds \to \mu(A), \quad \text{as } T\to\infty, \text{ in } L^2(\Omega,\P).$$
\end{remark}

\section{Examples and comparisons}\label{example}

In this section, we present some examples that cover several results in earlier papers.

\begin{example}[Pure jump L\'evy processes]
  In the special case that $b=c\equiv0$ and $\kappa^*(x,z,u,v)$ $\equiv\kappa^*(u)$ which is a periodic function of period 1 and satisfies $\kappa_1\le\kappa^*(u)\le\kappa_2$ for all $u$, the homogenized constant is $\bar\kappa = \int_{\T^d} \kappa^*(u) du$ and $\bar b=0$. This is the case presented in \cite[Remark 5]{SU21}. Note that in that paper, the authors use a pure analytical approach --- Mosco convergence --- to identify the limit process.
\end{example}

\vskip 0.1in
\begin{example}[SDEs with jump noise]
  Let $L^\alpha=\{L_t^\alpha\}_{t\ge 0}$ be a $d$-dimensional isotropic $\alpha$-stable L\'evy process on a filtered probability space $(\Omega,\F,\P,\{\F_t\}_{t\ge0})$ given by
  \begin{equation*}
    L_t^\alpha=\int_0^t\int_{\Bo}y\tilde N^\alpha(dy,ds)+ \int_0^t\int_{B^c}y N^\alpha(dy,ds),
  \end{equation*}
  where $1<\alpha<2$, $N^\alpha$ is a Poisson random measure on $(\Ro d)\times\R_+$ with jump intensity measure $\nu^\alpha(dy)=\frac{dy}{|y|^{d+\alpha}}$, $\tilde N^\alpha$ is the associated compensated Poisson random measure, that is, $\tilde N^\alpha(dy,ds):= N^\alpha(dy,ds)-\nu^\alpha(dy)ds$. Consider the following SDE
  \begin{equation}\label{SDE}
     \begin{split}
       X_t^{x,\epsilon} = &\ x + \int_0^t \left(\frac{1}{\epsilon^{\alpha-1}} b\left(\frac{X_{s-}^{x,\epsilon}}{\epsilon}\right)+ c\left(\frac{X_{s-}^{x,\epsilon}}{\epsilon}\right)\right)ds \\
       &\ +\int_0^t\int_{\Bo}\sigma\left( \frac{X_{s-}^{x,\epsilon}}{\epsilon},y\right) \tilde N^\alpha(dy,ds)+ \int_0^t\int_{B^c}\sigma \left(\frac{X_{s-}^{x,\epsilon}}{\epsilon},y\right) N^\alpha(dy,ds),
    \end{split}
  \end{equation}
  where the functions $b,c$ are all periodic of period 1, the function $\sigma(x,y)$ is periodic in $x$ of period 1, and odd in $y$ in the sense that $\sigma(x,-y)=-\sigma(x,y)$ for all $x,y\in\R^d$. We assume that $\sigma\in\C^{1,2}(\R^d\times\R^d)$ and there exists constants $C_1>0, C_2>1$, such that for all $x_1,x_2,x,y\in\R^d$,
  \begin{equation*}
    |\sigma(x_1,y)-\sigma(x_2,y)|\le C_1|x_1-x_2| |y|, \quad
    C_2^{-1}|y| \le |\sigma(x,y)| \le C_2|y|.
  \end{equation*}
  Assume in addition that for every $x$, $\sigma(x,\cdot)$ is uniformly continuous and is $\C^2$-diffeomorphism with inverse $\tau(x,\cdot):=\sigma(x,\cdot)^{-1}$. Then we know that \eqref{SDE} possesses a unique strong solution which is a Feller process, for each $\e>0$, see \cite[Theorem 4.2, Corollary 4.3]{HDS22}.

  Now the generator of the solution processes $X^{x,\e}$ restricted to $\C^\infty(\T^d)$ is
  \begin{equation*}
    \begin{split}
      \A^\e_\alpha f(x) :=&\ \int_{\Ro d} \left[ f\left( x+\sigma\left(\frac{x}{\e},y\right)\right)-f(x)- \sigma\left(\frac{x}{\e},y\right) \cdot\nabla f(x) \ind_{B}(y) \right] \nu^\alpha(dy)  \\
      &\ + \left[\frac{1}{\e^{\alpha-1}} b\left(\frac{x}{\e}\right) + c\left(\frac{x}{\e}\right) \right] \cdot\nabla f(x).
    \end{split}
  \end{equation*}
  We can rewrite it, by a change of variables and the oddness of $y\to\sigma(x,y)$, to the form in \eqref{Ae} with
  \begin{equation}\label{kappa-tau}
    \kappa(x,z,u)\equiv \kappa(x,z): = |\det \nabla_z\tau(x,z)| \frac{|z|^{d+\alpha}}{|\tau(x,z)|^{d+\alpha}},
  \end{equation}
  that is,
  \begin{equation}\label{kappa-sigma}
    \int_A \kappa(x,z)\frac{dz}{|z|^{d+\alpha}} = \int_{\Ro d} \ind_A(\sigma(x,y))\nu^\alpha(dy),\quad A\in\B(\Ro d).
  \end{equation}
  Then the function $\kappa$ satisfies assumptions \eqref{non-degenerate} and \eqref{Holder} (see \cite[ Assumption H3, Lemma 2.3, Proposition 2.5]{HDS22}), as well as assumption \eqref{kappa_0} with $\kappa_0(x,z,u) \equiv \kappa(x,z)$. Note that for each $x$, the oddness of $\sigma(x,\cdot)$ implies the oddness of $\tau(x,\cdot)$, and further the symmetry of $\kappa(x,\cdot)$ in the sense that
  \begin{equation*}
    \kappa(x,z)=\kappa(x,-z) \quad \text{for all }x,z.
  \end{equation*}

  We assume further that (cf. \cite[Assumption H5]{HDS22}),
  \begin{align*}
    \textstyle{\DivEps 1}\sigma(x,\e y) \to \nabla_y\sigma(x,0)\cdot y, \quad\text{uniformly in } x \text{ and } y, \quad \text{as } \e\to 0^+,
  \end{align*}
  Then we can prove easily (e.g., by \cite[Theorem 7.17]{Rud76}) that for each $z$,
  \begin{align*}
    & 
    \textstyle{ \DivEps 1} \tau(x,\e z) \to \nabla_z \tau(x,0) \cdot z \quad\text{and}\quad \nabla_z \tau(x,\e z)\to \nabla_z \tau(x,0), \quad\text{uniformly in } x, y, \quad \text{as } \e\to 0^+.
  \end{align*}
  Hence, we conclude that the function $\kappa$ defined in \eqref{kappa-tau} satisfies assumption \eqref{ez-1} with
  \begin{equation*}
    \tilde\kappa(x,z) \equiv \tilde\kappa(x):= |\det \nabla_z\tau(x,0)| \frac{1}{|\nabla_z\tau(x,0)|^{d+\alpha}},
  \end{equation*}
  Applying Theorem \ref{homogenization}, we know that the sequence of solutions $X^{x,\epsilon}$ converges in distribution to a L\'evy process $\bar X^x$ starting from $x$ with L\'evy triplet $(\bar b,0,\bar \nu)$ given in \eqref{homogenized}. By the symmetry of $\kappa$ and $\nu^\alpha$, the homogenized constant $\bar b = \int_{\T^d}(I+\nabla \hat b(x))\cdot c(x)\mu(dx)$, where $\mu$ is the invariant measure of the Feller process generated by
  \begin{equation*}
    \tilde\A_\alpha f(x) := \int_{\Ro d} \left[ f\left( x+\nabla_y\sigma\left(x,0\right)\cdot y\right)-f(x)- y\cdot \nabla_y\sigma\left(x,0\right) \cdot\nabla f(x) \ind_{B}(y) \right] \nu^\alpha(dy) + b\left(x\right)\cdot\nabla f(x),
  \end{equation*}
  and $\hat b$ is the unique solution to the Poisson equation $\tilde \A_\alpha\hat b=b$. Moreover, the homogenized function is $\bar\kappa(z) = \int_{\T^d} \kappa(x,z) \mu(dx)$. This coincides with the result in \cite[Theorem 5.2]{HDS22}. To see this, we derive $\bar\nu(A)$ for $A\in\B(\Ro d)$ by \eqref{kappa-sigma},
  $$\bar\nu(A) = \int_A \int_{\T^d} \kappa(x,z) \mu(dx) \frac{dz}{|z|^{d+\alpha}} = \int_{\Ro d}\int_{\T^d}\ind_A(\sigma(x,y) )\mu(dx)\nu^\alpha(dy).$$
  In particular, this also generalizes the result in \cite{Fra07}, where the author consider the special case $\sigma(x,y)=\sigma_0(x)y$.
\end{example}

\vskip 0.1in
\begin{example}[One-dimensional jump processes]
  Consider the one-dimensional case with $\alpha\in(1,2)$, $c\equiv0$ and $\kappa^*(x,z,u,v)\equiv \kappa^*(x,v)$, that is,
  \begin{equation*}
    \A^\e_{\mathrm{1d}} f(x) = \int_{-\infty}^{+\infty} \left[ f( x+z)-f(x) - zf'(x) \ind_{\{|z|<1\}}(z) \right] \kappa^*(\textstyle{\frac{x}{\e}}, \textstyle{\frac{z}{\e}})J(z)dz + \textstyle{\frac{1}{\e^{\alpha-1}}} b(\textstyle{\frac{x}{\e}}) f'(x).
  \end{equation*}
  Here $J$ is the density of an $\alpha$-stable L\'evy measure on $\R\setminus\{0\}$ (see \cite[Remark 14.4]{Sat99}), that is,
  \begin{equation*}
    J(z) = j^+ z^{-(1+\alpha)}\ind_{(0,+\infty)}(z) + j^- |z|^{-(1+\alpha)}\ind_{(-\infty,0)},
  \end{equation*}
  with constants $j^+,j^->0$, so that assumption \eqref{bound} is fulfilled.

  Besides assumptions \eqref{non-degenerate}, \eqref{Holder}, \ref{bc} and \ref{center}, we assume further that there exists two functions $\kappa_0^+,\kappa_0^-:\Ro d\to [0,\infty)$ such that for each $x$,
  \begin{equation*}
    \lim_{y\to\pm\infty} y^{-1} \int_0^y \kappa^*(x,v) dv = \kappa_0^{\pm}(x).
  \end{equation*}
  Note that this is the type of assumption in \cite{HIT77}. Then by L'H\^opital's rule, we have
  \begin{equation*}
    \lim_{v\to\pm\infty} \kappa^*(x,v) = \kappa_0^{\pm}(x).
  \end{equation*}
  Thus, our assumption \eqref{kappa_0} is fulfilled by letting $$\kappa_0(x,z,u)\equiv\kappa_0(x,z):= \kappa_0^+(x)\ind_{(0,+\infty)}(z)+ \kappa_0^-(x)\ind_{(-\infty,0)}(z).$$
  And assumption \eqref{ez} hold trivially.
  Now using Theorem \ref{homogenization}, the Feller process generated by $\A_{\mathrm{1d}}^\e$ converges in distribution, as $\e\to0^+$, to a one-dimensional $\alpha$-stable L\'evy process $\bar X$ with L\'evy triplet $(\bar b,0,\bar \nu)$ in \eqref{homogenized}. Let $\mu$ be the invariant measure of the Feller process generated by
  \begin{equation*}
    \tilde\A_{\mathrm{1d}} f(x) = \int_{-\infty}^{+\infty} \left[ f( x+z)-f(x) - zf'(x) \right] \kappa^*(x,z)J(z)dz + b(x) f'(x).
  \end{equation*}
  Then the homogenized drift $\bar b$ is
  \begin{equation*}
    \bar b = \frac{1}{\alpha-1} \int_0^1 \left( j^+ \kappa^+(x) + j^- \kappa^-(x) \right) \hat b'(x) \mu(dx),
  \end{equation*}
  where $\hat b$ is the unique solution to the Poisson equation $\tilde\A_{\mathrm{1d}} \hat b = b$. Define two constants  $\bar\kappa^{\pm}:=\int_{\T^d}\kappa_0^{\pm}(x)\mu(dx)$, then $$\bar\kappa(z) = \bar\kappa^+\ind_{(0,+\infty)}(z)+ \bar\kappa^-\ind_{(-\infty,0)}(z).$$
  Note that the authors in \cite{HIT77} consider the operators of the form $\tilde\A_{\mathrm{1d}}$ with $\kappa^*(\frac{x}{\e},\frac{z}{\e})$ and $\frac{1}{\e^{\alpha-1}}b(\frac{x}{\e})$ in place of $\kappa^*(x,z)$ and $b(x)$, which is slightly different from $\A^\e_{\mathrm{1d}}$, but the homogenized jump measure therein coincides with $\bar\nu$.
\end{example}

\section{Generalization to symmetric stable-like processes with variable order}\label{stable-like}

  A class of pure jump processes is of great interests, called \emph{stable-like} processes (see the survey \cite{JS01} and literature therein). Locally, a stable-like process looks like a stable process, so that for every $x$, its jump measure $\eta(x,\cdot)$ is $\pmb\alpha(x)$-stable \cite[Theorem 14.3]{Sat99}, i.e.,
  \begin{equation}\label{eta}
    \eta(x,A) = \int_0^\infty \int_S \ind_A(r\xi) \rho(x,d\xi) \frac{dr}{r^{1+\pmb{\alpha}(x)}}, \quad A\in\B(\Ro d),
  \end{equation}
  where $\rho$ is a map from $\R^d$ to the space $\mathcal M(S)$ of finite measures on $S$, called the \emph{spherical part} of $\eta$ or the \emph{spectral measure} of the process; the stability index  $\pmb{\alpha}$ is now a function taking values in $(0,2)$. Due to the variety of $\pmb{\alpha}$, such a jump kernel $\eta$ cannot be written as the product of a bounded function $\kappa$ with a reference L\'evy measure with constant stability index (cf. \eqref{non-degenerate} and \eqref{homogeneous}), so the homogenization framework in previous sections cannot be applied to such jump processes. However, we can modify a bit the assumptions for the coefficient $\kappa$ to deal with such case. Note that some authors also use the terminology ``stable-like'' to refer to the case \eqref{Ae} (or \eqref{L^{b,eta}}, e.g., \cite{CZ17}). But we shall reserve it as the case \eqref{eta} in order to distinguish.

\subsection{Homogenization result}


In order that there exist a jump process with jump kernel \eqref{eta}, we need some assumptions \cite{KKS21}:
\begin{itemize}
  \item $\pmb\alpha:\R^d\to(0,2)$ is of class $\C^1$, periodic of period 1, and satisfies that for all $x\in\R^d$,
      \begin{equation}\label{bold-alpha}
        0< \alpha := \min_{x\in\R^d}\pmb\alpha(x) \le \bar\alpha := \max_{x\in\R^d}\pmb\alpha(x)<2,
      \end{equation}
      where the minimum and maximum of $\pmb\alpha$ are attainable since $\pmb\alpha$ is continuous and periodic;
  \item $\rho: \R^d\to \mathcal M(S)$ is periodic of period 1 and symmetric, i.e., $\rho(x,\xi)=\rho(x,-\xi)$ for all $x\in\R^d$ and $\xi\in S$, and has a density, still denoted by $\rho$, i.e, $\rho(x,d\xi) = \rho(x,\xi)d\xi$,
      and satisfies the following conditions:
  \begin{itemize}
    \item[-] $\inf_{x\in\R^d} \left(\rho(x,S) \wedge \inf_{\theta\in S} \int_S (\theta\cdot\xi)^2 \rho(x,d\xi) \right) >0$,
    \item[-] $\rho$ is Lipschitz in the sense that there exists $C>0$ such that for all $x,y \in\R^d$,
    \begin{equation*}
      \left| \rho(x,S) - \rho(y,S) \right| + \mathcal W_1(\hat\rho(x,\cdot), \hat\rho(y,\cdot)) \le C|x-y|,
    \end{equation*}
    where $\hat\rho = (\rho(\cdot,S))^{-1}\rho$ is the normalized probability measure of $\rho$ and $\mathcal W_1$ is Wasserstein-1 distance of probability measures (e.g., \cite{Vil03}),
    \item[-] $\rho$ is dominated by a probability function $\rho_0$ on $S$, that is, there exists a constant $C>0$ such that $\rho(x,\xi) \le C\rho_0(\xi)$ for all $x\in\R^d, \xi\in S$.
  \end{itemize}
\end{itemize}

We still consider the operator $\A^\e$ in \eqref{Ae}, with coefficients:
\begin{itemize}
  \item $b=c\equiv0$, $J(z) = |z|^{-(d+\alpha)}$ is the density of an rotationally invariant $\alpha$-stable L\'evy measure with $\alpha$ given in \eqref{bold-alpha}, 
  \item $\kappa^*$ is given by
  \begin{equation*}
    \kappa^*(x,z,u,v)\equiv \kappa^*(x,v) := \rho(x,v/|v|) |v|^{\alpha-\pmb\alpha(x)},
  \end{equation*}
\end{itemize}
The resulting function $\kappa(x,z,u)\equiv\kappa^*(x,u)$ does not satisfying either \eqref{non-degenerate} or \eqref{Holder} in general. But \eqref{kappa_0} still holds with
$$\kappa_0(x,z,u)\equiv\kappa_0(x,z):= \rho(x,z/|z|) \ind_{\{\pmb\alpha(x)=\alpha\}},$$
and \eqref{ez} holds trivially with $\tilde\kappa = \kappa = \kappa^*$.

%

Note that because of the symmetry of $\rho$, the indicator function $\ind_{[1,2)}(\alpha)$ in \eqref{Ae} has no effect. The jump measures of $\A^\e$ is of the form \eqref{eta} with $\pmb\alpha$ and $\rho$ replaced by
  \begin{equation}\label{new-jump-coef}
    \pmb\alpha_\e(x) := \pmb\alpha(x/\e), \quad \rho^\e(x,\xi) := \e^{\pmb\alpha(x/\e)-\alpha} \rho(x/\e,\xi).
  \end{equation}
Since $\kappa$ and $\tilde\kappa$ coincide and the jump kernel $\rho$ is symmetric, we see that $\tilde \A^\e \equiv \tilde\A$ for all $\e$ (cf. \eqref{Ae-tilde} and \eqref{A-tilde}). Their jump measure is given by \eqref{eta} with $\rho(x,d\xi) = \rho(x,\xi)d\xi$.

The counterpart of Proposition \ref{heat_kernel} is the following, where the well-posedness is taken from \cite[Theorem 3.1]{KKS21} and the heat kernel estimate is adapted from \cite[Proposition 3.1, Theorem 5.1]{Kol00}.
\begin{proposition}\label{heat-kernel-stl}
  Under the above listed conditions, for every $x\in\R^d$, the martingale problems for $(\A^\e,\delta_x)$, $\e>0$ and  $(\tilde\A,\delta_x)$ have unique solutions $\P^\e_x$ on $(\D,\B(\D))$ and $\tilde\P_x$ on $(\D_{\mathrm{per}},\B(\D_{\mathrm{per}}))$, respectively. The coordinate processes $X^\e$ and $\tilde X$ are $\R^d$- and $\T^d$-valued Feller process, respectively, starting from $x$. Moreover, $\tilde X$ has a jointly continuous transition probability density $\tilde p(t;x,y)$ satisfying that for every $T>0$, there exist constants $0<C_1<1$, $C_2, C_3>0$ and $\delta\in(0,1)$ such that for all $t\in (0,T]$ and $x,y\in \T^d$,
  \begin{equation}\label{heat-kernel-est-stl}
    \begin{split}
      \tilde p(t;x,y) &\ge \sum_{l\in\Z^d} \bigg\{ C_1 \left[t^{-d/\pmb\alpha(x)}\wedge \left(t |x-y+l|^{-(d+\pmb\alpha(x))}\right)\right] \left( 1- C_2 t^\gamma \right) \\
      &\qquad\qquad - C_3 t^\delta \left[ 1 \wedge |x-y+l|^{-(d+\pmb\alpha(x))} \right] \bigg\} \vee 0,
    \end{split}
  \end{equation}
  with any $0< \gamma < 1/(d+\pmb\alpha(x))$ and $0<\delta<1- \beta(d+\pmb\alpha(x))$.
\end{proposition}
By \eqref{new-jump-coef}, we have for all $\e>0$ and `good' test function $f:\R^d\to\R$,
\begin{equation}\label{scaling-stl}
  \A^\e f(x) = \e^{-\alpha} (\tilde \A f_{1/\e})(x/\e),
\end{equation}
so that Lemma \ref{rescale} holds with $\{X^\e_t\}_{t\ge0} \stackrel{\mathtt d}{=} \big\{ \e \tilde X_{t/\e^{\alpha}} \big\}_{t\ge0}$ in this case. Proposition \ref{exponential-ergodicity} and Lemma \ref{invatiant-meausre} holds trivially with $\tilde X^\e \equiv \tilde X$, $\tilde P^\e_t \equiv \tilde P_t$ and $\mu_\e \equiv \mu$. In particular, to prove the counterpart of Proposition \ref{exponential-ergodicity}, as indicated in its own proof, it suffices to show that there exists a $t_0>0$ such that $\tilde p(t_0;x,y)$ is bounded from below by a positive constant independent of $x, y\in\T^d$. To this purpose, we choose $\gamma_0>0$ and $t_0 <1$ such that
\begin{equation*}\left\{
  \begin{aligned}
    &\gamma_0< 1/(d+\bar\alpha), \quad t_0^{1/\alpha} \ge 1/\sqrt{2}, \\
    &1- C_2 t_0^{\gamma_0} >0, \quad C_1 t_0^{-d/\bar\alpha} (1- C_2 t_0^{\gamma_0}) - C_3 >0.
  \end{aligned}\right.
\end{equation*}
Since for any $x,y\in\T^d = [0,1]^d$, there is always a $l\in\Z^d$ such that $|x-y+l|\le 1/\sqrt{2} \le t_0^{1/\alpha} \le t_0^{1/\pmb\alpha(x)} <1$, we obtain from \eqref{heat-kernel-est-stl} that
\begin{equation*}
  \tilde p(t;x,y) \ge C_1 t_0^{-d/\pmb\alpha(x)} \left( 1- C_2 t^{\gamma_0} \right) - C_3 t_0^\delta \ge C_1 t_0^{-d/\bar\alpha} (1- C_2 t_0^{\gamma_0}) - C_3,
\end{equation*}
where the last quantity is positive and independent of $x,y$. This proves that Proposition \ref{exponential-ergodicity} holds true for the case here. As a consequence, Corollary \ref{ergodic} also holds. Therefore, the part (i) of the proof of Theorem \ref{homogenization} can still proceed with no obstacles. In conclusion, we get the following homogenization result for stable-like processes which recovers the result of \cite[Theorem 1]{Fra06}.
\begin{theorem}
  Under the same assumption as Proposition \ref{heat-kernel-stl}, 
  we have the following weak convergence on the space $\D(\R_+;\R^d)$:
  $$X^\e \ \Rightarrow \bar X, \quad \text{as } \e\to 0^+,$$
  where the limit process $\bar X$ is a L\'evy process with L\'evy triplet $(0,0,\bar \nu)$ given by
  \begin{equation*}
    \bar\nu(A) = \int_0^\infty \int_S \ind_A(r\xi) \left( \int_{\T^d} \rho(x,\xi) \ind_{\{\pmb\alpha(x)=\alpha\}} \mu(dx) \right) d\xi \frac{dr}{r^{1+\alpha}}, \quad A\in\B(\Ro d),
  \end{equation*}
  where $\mu$ is the invariant probability measure of $\tilde X$ with jump measure \eqref{eta}.
\end{theorem}
\begin{remark}
  In the special case that $\mu(\{x\in\T^d: \pmb\alpha(x)=\alpha\}) =0$, the homogenized measure $\bar{\nu}$ is trivially zero, i.e., $X^\epsilon$ converges weakly to the constant zero process. In this sense the scaling \eqref{scaling-stl} is strong. One may expect that it would give a non-trivial limit if we change the scaling \eqref{scaling-stl} to
  \begin{equation*}
    \A^\e f(x) = \big[ \e^{-\pmb\alpha} (\tilde \A f_{1/\e}) \big](x/\e).
  \end{equation*}
  But the latter cannot yield a scaling for the generated processes $X^\e$ and $\tilde X$ like Lemma \ref{rescale}. Thus, the functional convergence in Corollary \ref{ergodic} and thereby the final homogenization result are unknown in this case.
\end{remark}

\subsection{Numerical simulations}

In this subsection, we will present a numerical experiment to give some visualization for the homogenization result. Besides, for a jump particle in a periodic structure, a typical question in practical applications is the distribution property of the first exit time for the particle to escape a given domain. We will also give some visualization for the empirical mean of the first exit time.

\subsubsection{Numerical scheme}

Set the dimension $d=2$, and
\begin{equation*}
  \begin{split}
    \pmb\alpha(x) &= 1+ \frac{1}{4}\Big[ \ind_{[0,\frac{3}{8})}(x_1)\cos\left( \textstyle{\frac{8\pi}{3}} x_1 \right) + \ind_{(\frac{5}{8},1]}(x_1)\cos\left( \textstyle{\frac{8\pi}{3}} (1-x_1) \right) -\ind_{[\frac{3}{8},\frac{5}{8}]}(x_1) \\
    &\qquad\qquad + \ind_{[0,\frac{3}{8})}(x_2)\cos\left( \textstyle{\frac{8\pi}{3}} x_2 \right) + \ind_{(\frac{5}{8},1]}(x_2)\cos\left( \textstyle{\frac{8\pi}{3}} (1-x_2) \right) -\ind_{[\frac{3}{8},\frac{5}{8}]}(x_2) \Big]
  \end{split}
\end{equation*}
for $x=(x_1,x_2)\in [0,1]^2$, and
\begin{equation*}
  \rho(x,d\xi) \equiv \rho(d\xi) := \sum_{i=1}^4 \delta_{e_i}(d\xi), \quad \xi\in\S^1.
\end{equation*}
where $e_1=(1,0)$, $e_2=(0,1)$, $e_3=(-1,0)$, $e_1=(0,-1)$ are canonical orthonormal basis for $\R^2$. It is easy to verify that such $\pmb\alpha$ and $\rho$ satisfy all conditions listed in the beginning of last subsection. The minimum of $\pmb\alpha$ is $\alpha = \frac{1}{2}$. The spectral measure of the process $X^\e$ is $\rho^\e(x,d\xi) = \e^{\alpha(x/\e)-\frac{1}{2}} \rho(d\xi)$, while the spectral measure of the limit process $\bar X$ is $\bar\rho(d\xi) = \mu\left(\pmb\alpha^{-1}(\frac{1}{2})\right) \rho(d\xi)$.

We use the method in \cite{MN94} to simulate the `one-step' stable random vectors, and then use the scheme developed in \cite{Bot10} to simulate the stable-like processes $X^\e$ and $\tilde X$ by gluing all `one-step' stable random vectors together. The convergence of the latter scheme is proved in \cite{BS09}. Note that the distribution of each `one-step' stable random vector depends on the position of the previous step. As for the limit L\'evy process $\bar X$, the `one-step' vectors are independent of the previous positions.

In all path sampling, we always use time-step size $\Delta t=0.01$. There are two ways to approximately compute $\mu\left(\pmb\alpha^{-1}(\frac{1}{2})\right)$, as mentioned in Remark \ref{remark-3}.(iii). We first generate 1000 sample paths of the process $\tilde X$ with 100 steps by the above-mentioned scheme, and count the number of samples at the final time step inside the set $\pmb\alpha^{-1}(\frac{1}{2}) = [\frac{3}{8},\frac{5}{8}]^2$. Then we use a sample path with time length $T=100$, and calculate the time-average $\frac{1}{t} \int_0^t \ind_{\pmb\alpha^{-1}(\frac{1}{2})} (\tilde X_s) ds$ for varying $t$. The following figure (Fig.~\ref{inv_meas}) shows the results by these two methods. It shows that when $t$ is large, the two results are very close.

\begin{figure}[!thb]
  \centering
  \includegraphics[width=0.6\textwidth]{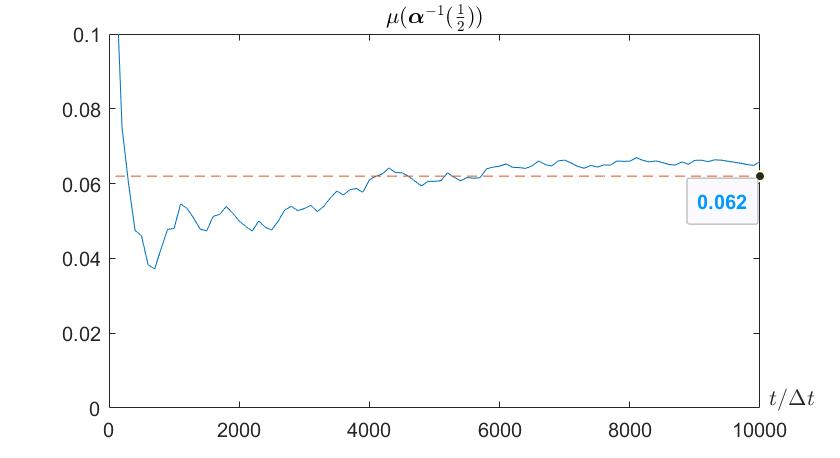}
  \caption{Computations of $\mu\left(\pmb\alpha^{-1}(\frac{1}{2})\right)$. The horizontal coordinate indicates the number of step $t/\Delta t$ and the vertical coordinate indicate the time-average of times of a single path inside $\pmb\alpha^{-1}(\frac{1}{2})$, with time-step size $\Delta t=0.01$. The dashed line is the result of Monte Carlo method, by generating 1000 samples with 100 steps.} \label{inv_meas}
\end{figure}

The following figure (Fig.~\ref{sample_path}) shows the sample paths on the plane for the processes $X^\e$ with $\e=10^{-1},10^{-2},10^{-3},10^{-4},10^{-5}$ and the limit process $\bar X$. As we can see, when the scaling parameter $\e$ gets smaller and smaller, the path of $X^\e$ is getting more and more concentrated into some small clusters.

\begin{figure}[!thb]
  \centering
  \includegraphics[width=1\textwidth]{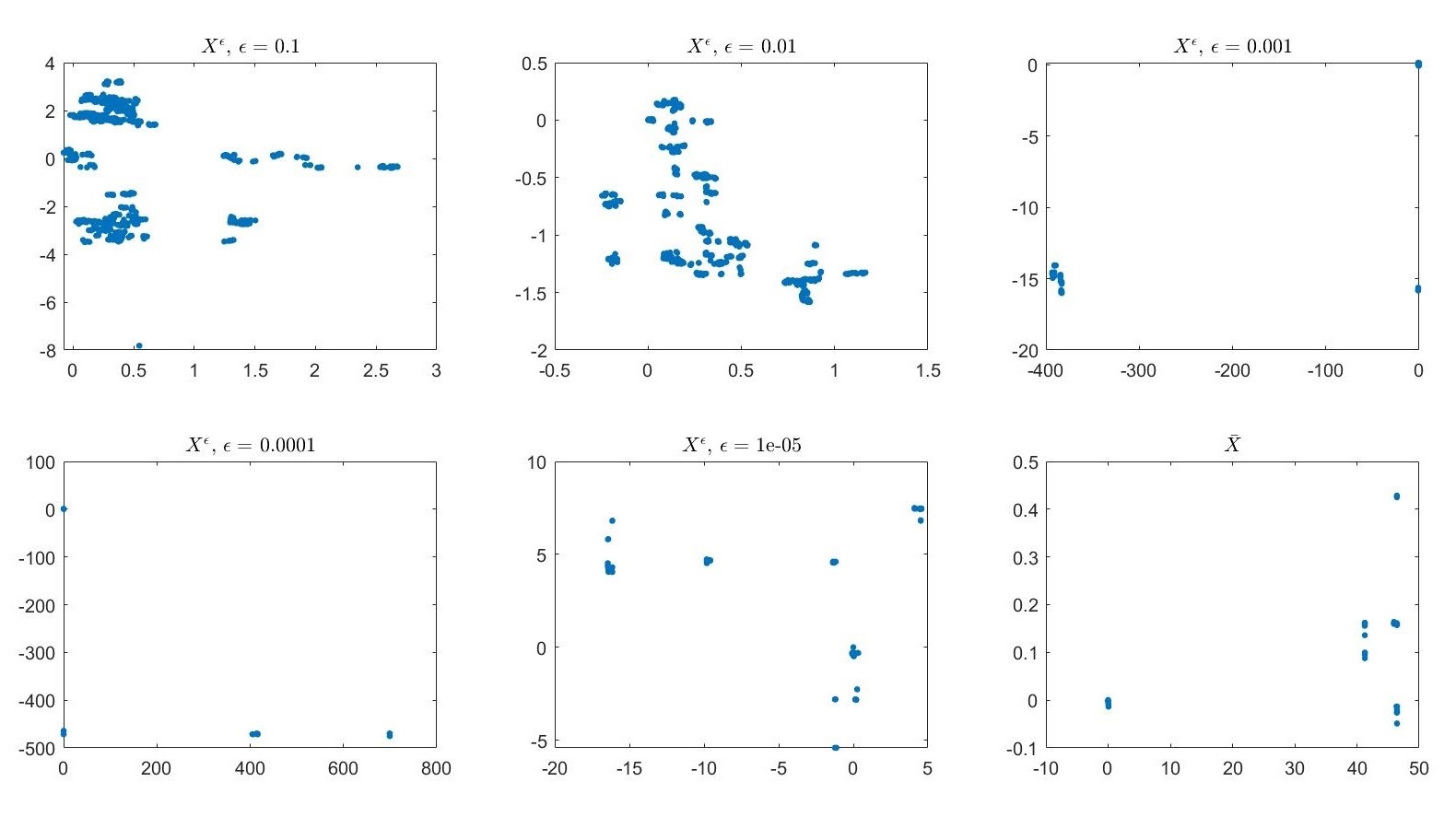}
  \caption{Sample paths for $X^\e$ and $\bar X$ on the time interval $[0,10]$ with time-step size $0.01$. The coordinates represent the particle positions in $\R^2$.} \label{sample_path}
\end{figure}

\subsubsection{Simulations of first exit time}

For $x\in\D$ and $r>0$, define
\begin{align*}
  S_r(x) &:= \inf\{t\ge0: |x(t)|\ge r \text{ or } |x(t-)|\ge r\}, \\
  S_{r+}(x) &:= \inf\{t\ge0: |x(t)|> r \text{ or } |x(t-)|> r\}, \\
  V(x) &:= \{r>0: S_r(x)<S_{r+}(x)\}.
\end{align*}
It is easy to see that
\begin{equation*}
  S_r(x) = \inf\left\{t\ge0: \sup_{0\le s\le t}|x(s)|\ge r \right\},
\end{equation*}
which is exact the \emph{first exit time} for the path $x$ to escape the ball of radius $r$. In order to simplify the notation as before, we denote $X^0 := \bar X$. Using \cite[Lemma VI.2.10]{JS87}, we know that for all $\e\ge0$ and  $\omega\in\Omega$, $V(X^\e(\omega))$ is an at most countable subset of $\R_+$. It follows that each set
\begin{equation*}
  U^\e = \{ r>0: \P(r\in V(X^\e) ) = 0 \},
\end{equation*}
has full measure in $\R_+$, and thus we have
\begin{lemma}
  The set $\cap_{\e>0} U^\e$ also has full measure in $\R_+$.
\end{lemma}

Now for each $r\in \cap_{\e>0} U^\e$, the mapping $X^\e \mapsto S_r(X^\e)$ is continuous for all $\e\ge0$, by virtue of \cite[Proposition VI.2.11]{JS87}. Hence, by the continuous mapping theorem (see, e.g., \cite[Corollary 3.1.9]{EK09}), we have the following corollary, of which the second statement follows from \cite[Theorem 25.12]{Bil86}.
\begin{corollary}
  For each $r\in \cap_{\e>0} U^\e$, $S_r(X^\e)\Rightarrow S_r(\bar X)$ as $\e\to0^+$. If in addition, the family $\{S_r(X^\e)\}_{\e>0}$ is uniformly integrable, then $\E(S_r(X^\e))\to \E(S_r(\bar X))$.
\end{corollary}


We choose $r=\pi$. The following figure (Fig.~\ref{mean_exit}) shows the empirical mean of the first exit time to escape the ball of radius $\pi$ for the processes $X^\e$, $\e=10^{-6},10^{-12},10^{-18},10^{-24},10^{-30}$, and the limit process $\bar X$. Through this figure, we can see that as $\e$ gets smaller, the convergence rate of the empirical mean with respect to the number of samples is getting slower.

\begin{figure}[!thb]
  \centering
  \includegraphics[width=1\textwidth]{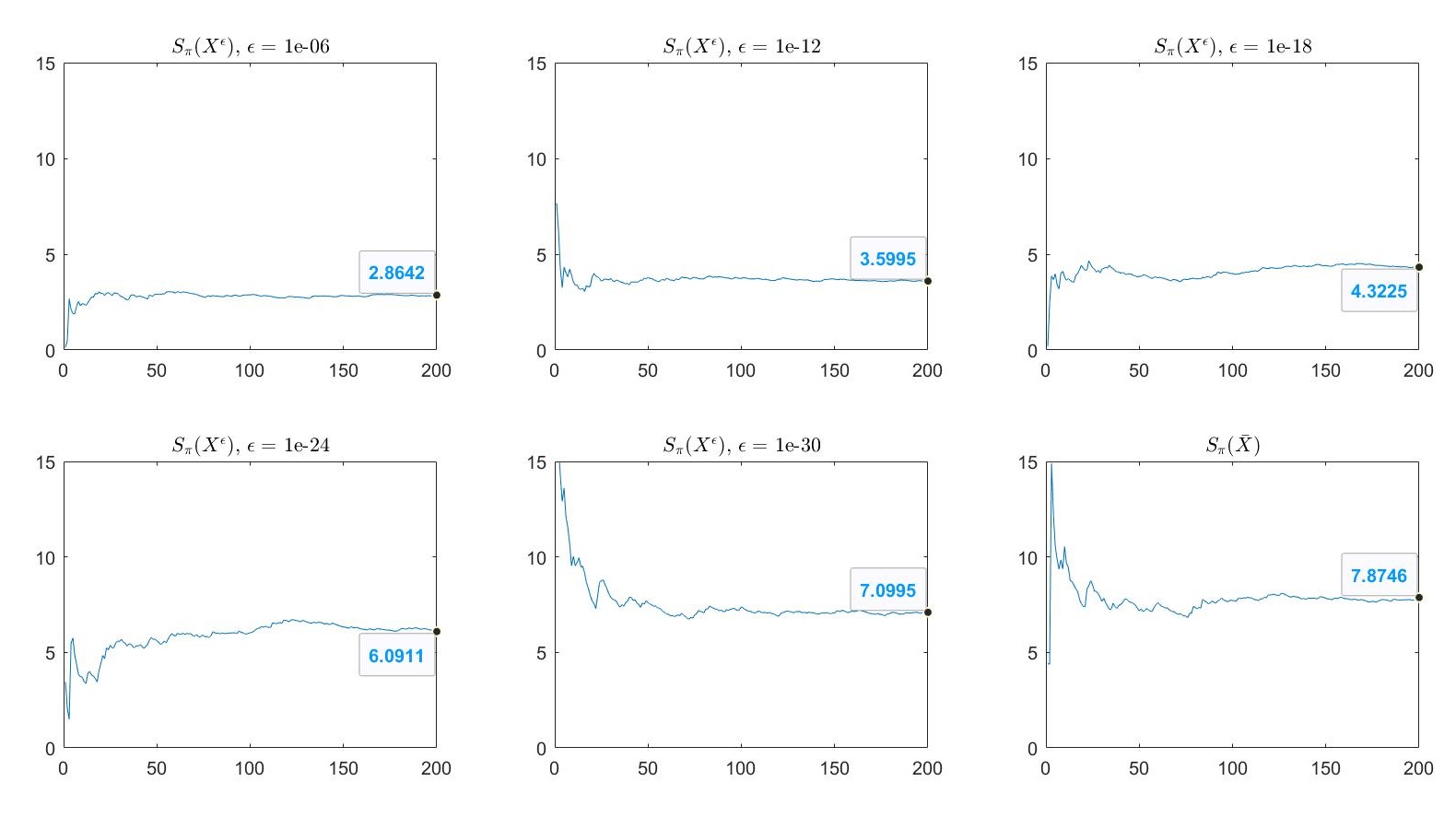}
  \caption{Empirical mean of the first exit time for $X^\e$ and $\bar X$. The horizontal and vertical coordinates indicate the number of test samples and the average to the present, respectively. The labeled points is the values of empirical mean of 200 samples with time-step size 0.01.} \label{mean_exit}
\end{figure}

The next figure (Fig.~\ref{mean_exit_conv}) shows the trend of the empirical mean of $S_\pi(X^{\e})$, $\e=10^{-n}$, with respect to $n$. It follows that the difference of empirical mean of $S_\pi(X^{\e})$ with that of $S_\pi(\bar X)$ is almost inversely proportional to $n$, so as to be proportional to $\frac{1}{\log(\e^{-1})}$. Even though this rate is not strict, we can still conclude from the figure that the convergence rate of the mean first exit time with respect to $\e$ is very slow.

\begin{figure}[!thb]
  \centering
  \includegraphics[width=0.6\textwidth]{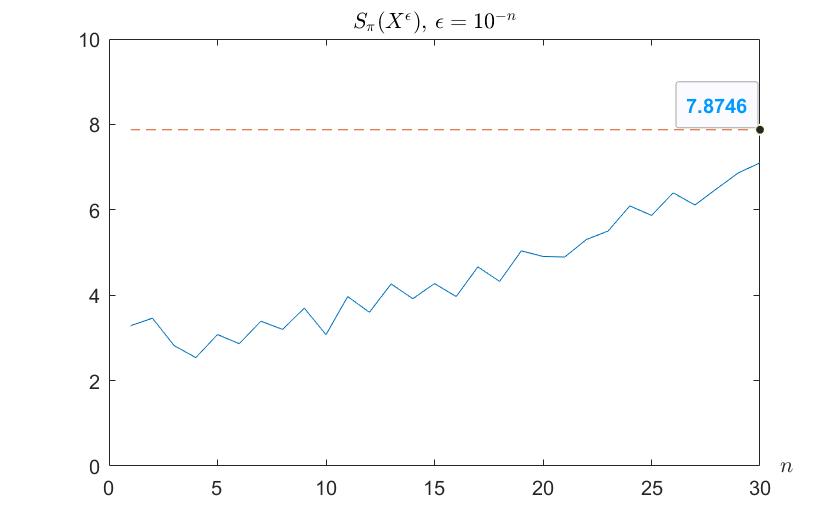}
  \caption{Empirical mean of the first exit time for $X^{\e}$ with $\e = 10^{-n}$, $n=1,2,\cdots,30$. The horizontal coordinate indicates the parameter $n$ and the vertical coordinate indicate the empirical mean of $S_\pi(X^{\e})$. The dashed line is the reference of empirical mean of $S_\pi(\bar X)$. All empirical mean are simulated with 200 samples with time-step size 0.01.} \label{mean_exit_conv}
\end{figure}

Therefore, the way of simulating the first exit time of a particle in a periodic structure by choosing a very small $\e$ is quite expensive (in computational time) and not precise in general. The advantage of our homogenization result is that we can use directly the limit process we have just identified to study its distribution properties, instead of using approximations.

%
%
%

%

\begin{appendices}

\section{Properties of the semigroups and generators}

As we have seen in Section \ref{Kernel&Ergodicity}, we need the Feller nature of the semigroups and the properties of the generators, in order to study the ergodicity of the canonical Feller processes, see Proposition \ref{heat_kernel} and Proposition \ref{exponential-ergodicity}. We devote this section to investigate the semigroups and generators. As corollaries, we can also obtain the solvability of the Possion equations with zeroth-order terms and the generalized It\^o's formula, which are used in Corollary \ref{Poisson} and the proof of our main result Theorem \ref{homogenization}.

We consider the following operator
\begin{equation}\label{L^{b,eta}}
  \L^{b,\eta} f(x) := b(x)\cdot\nabla f(x) +\int_{\Ro d} \left[ f( x+z)-f(x)- z\cdot\nabla f(x)\ind_B(z) \right] \kappa^\sharp(x,z)J(z)dz,
\end{equation}
with $\eta(x,dz):=\kappa^\sharp(x,z)J(z)dz$. We suppose that the vector field $b:\R^d\to\R^d$ is in the H\"older class $\C^\beta$ with some $\beta\in(0,1)$ and periodic of period 1, satisfying that for all $x\in\R^d$,
\begin{equation*}
  |b(x)| \le b_0,
\end{equation*}
for some constant $b_0>0$. Suppose that $J$ satisfies \eqref{j}, that is,
\begin{equation*}
  j_1 |z|^{-(d+\alpha)} \le J(z) \le j_2 |z|^{-(d+\alpha)}, \quad z\in\Ro d,
\end{equation*}
with $\alpha\in(1,2)$, $\kappa^\sharp(x,z)$ is periodic in $x$ of period 1 and satisfies similar conditions as \eqref{non-degenerate} and \eqref{Holder}, that is, for all $x,x_1,x_2,z\in\R^d$,
\begin{gather*}
  \kappa_1\le\kappa^\sharp(x,z)\le\kappa_2, \\
  |\kappa^\sharp(x_1,z)-\kappa^\sharp(x_2,z)|\le \kappa_3|x_1-x_2|^\beta.
\end{gather*}

Note that the operators $\tilde\A$, $\A^\e$ and $\tilde\A^\e$ are all of the form \eqref{L^{b,eta}} by choosing appropriate $\kappa^\sharp$. It is easy to verify that $\L^{b,\eta}f\in\C(\T^d)$ for each $f\in\C^{1+\gamma}(\T^d)$ with $1+\gamma>\alpha$.
Now we treat $\L^{b,\eta}$ as a perturbation of $\L^\eta:=\L^{0,\eta}$ by the gradient operator $\L^b:=\L^{b,0}=b\cdot\nabla$, and follow \cite{BJ07,CH15,GS19} to investigate the heat kernel for $\L^{b,\eta}$.

We introduce the following functions on $(0,\infty)\times\R^d$ for later use:
\begin{equation*}
  \varrho_\gamma(t;x) := t^{\gamma/\alpha} \left( t^{-(d+\alpha)/\alpha } \wedge |x|^{-(d+\alpha)} \right), \quad\gamma\in\R.
\end{equation*}
For abbreviation we write
$c_0$ for the set of constants $(d,\alpha,\beta,\kappa_1,\kappa_2,\kappa_3,j_1,j_2)$. Before investigating the semigroups generated by $\L^{b,\eta}$, we need some facts for the heat kernels of $\L^{\eta}$ and $\L^{b,\eta}$.

By virtue of the periodicity assumptions on the coefficients, we can choose the underlying space to be $\T^d$ instead of $\R^d$ (cf. \cite[Section 3.3.2]{BLP78}). Indeed, if $\mathfrak q^\eta(t;x,y):[0,\infty)\times\R^d\times\R^d\to\R$ is the fundamental solution of $\L^{\eta}$, then for any test function $f\in\C^\infty{(\R^d)}$ that is periodic of period 1, the function $u(t,x):=\int_{\R^d} f(y)\mathfrak q^\eta(t;x,y) dy$ must be periodic in $x$, due to Kolmogorov backward equation $\frac{\partial u}{\partial t} + \L^\eta u = 0$ and the periodicity of its initial value $u(0,x) = f(x)$ and of all coefficients in $\L^\eta$. Now we define $q^\eta(t;x,y):=\sum_{l\in\Z^d}\mathfrak q^\eta(t;x,y+l)$, then $q^\eta$ is periodic in $y$ and $u(t,x)=\int_{\T^d} f(y) q^\eta(t;x,y) dy$. Therefore, we can restrict $q^\eta$ as a function from $[0,\infty)\times\T^d\times\T^d$ to $\R$, which is exactly the fundamental solution of $\L^{\eta}$ on the state space $\T^d$. The same arguments hold for the operator $\L^{b,\eta}$. Keeping these in mind, the following facts for the operator $\L^{\eta}$ are adapted from \cite[Theorem 1.1, Theorem 1.2, Theorem 1.3, Theorem 1.4, Remark 1.5, Lemma 3.17]{GS19}.
\begin{proposition}\label{q^eta}
  (i). The fundamental solution $q^\eta(t;x,y):[0,\infty)\times\T^d\times\T^d\to\R$ of $\L^\eta$ has the following properties: for all $(t,y)\in(0,\infty)\times\T^d$, the function $x\to q^\eta(t;x,y)$ is differentiable and the derivative $\nabla_x q^\eta(t;x,y)$ is jointly continuous on $(0,\infty)\times\T^d\times\T^d$; the integral in $\L^{\eta}_x q^\eta(t;x,y)$ is absolutely integrable and the function $\L^{\eta}_x q^\eta(t;x,y)$ is jointly continuous on $(0,\infty)\times\T^d\times\T^d$. For every $T>0$, there exist a constant $C_1 = C_1(c_0,T)>0$, such that for all $t\in (0,T]$ and $x,y\in \T^d$,
  \begin{align}
    q^\eta(t;x,y) &\le C_1\sum_{l\in\Z^d}\varrho_\alpha(t;x-y+l), \label{kernel} \\
    |\nabla_x q^\eta(t;x,y)| &\le C_1\sum_{l\in\Z^d}\varrho_{\alpha-1}(t;x-y+l), \label{kernel-derivative} \\
    |\L^{\eta}_x q^\eta(t;x,y)| &\le C_1\sum_{l\in\Z^d}\varrho_0(t;x-y+l); \label{kernel-frac-derivative}
  \end{align}
  there exist $T_0 = T_0(c_0)>0$ and $C_2 = C_2(c_0)>0$ such that for all $t\in(0,T_0]$  and $x,y\in \T^d$,
  \begin{equation}\label{kernel-lower}
    q^\eta(t;x,y) \ge C_2\sum_{l\in\Z^d}\varrho_\alpha(t;x-y+l).
  \end{equation}

  (ii). Define a family of operators by
  \begin{equation}\label{T-eta}
    T^\eta_t f(x) = \int_{\T^d} f(y) q^\eta(t;x,y) dy, \quad f\in \C(\T^d),
  \end{equation}
  then $\{T^\eta_t\}_{t\ge0}$ forms a Feller semigroup on the Banach space $(\C(\T^d),\|\cdot\|_\infty)$ with generator the closure of $(\L^{\eta},\C^\infty(\T^d))$. The domain of the generator contains $\C^{1+\gamma}(\T^d)$ with $1+\gamma>\alpha$, on which the restriction of the generator is $\L^{\eta}$.
\end{proposition}
Note that the joint continuity of $\nabla_x q^\eta(t;x,y)$ is not mentioned explicitly in the previous references, but it is a consequence of \cite[Lemma 3.1, Lemma 3.5, Theorem 3.7, Lemma 3.10, Eq. (59)]{GS19}. In addition, it is only shown in the above reference that $\C^2(\T^d)$ is contained in the domain of the generator, but we can easily generalize to our case, using the same argument as the proofs of \cite[Theorem 1.3.(3a), Proposition 4.9]{GS19} and the fact that $\L^{\eta}f\in\C(\T^d)$ for each $f\in\C^{1+\gamma}(\T^d)$ with $1+\gamma>\alpha$.

For notational simplicity, the summation over the lattice $\Z^d$ will be omitted in all coming results. Keep in mind that there will be a summation over $\Z^d$ whenever the letter $l$ involved in the expression without ambiguity.

The following facts for the heat kernel of $\L^{b,\eta}$ are adapted from \cite[Theorem 1.5]{CZ18}, where the authors omitted the proof and pointed out that they can be similarly proved as in \cite{BJ07}. Since the two-sided estimates of the heat kernel of $\L^{b,\eta}$ are important for later use and also for the main part of the paper, we will only elaborate their proof.
\begin{proposition}\label{q-properties}
  There is a unique function $q^{b,\eta}(t;x,y)$ which is jointly continuous on $(0,\infty)\times\T^d\times\T^d$ and solves the following variation of parameters formula (or Duhamel's formula)
  \begin{equation}\label{integral-eqn}
    q^{b,\eta}(t;x,y) = q^\eta(t;x,y) + \int_0^t\int_{\T^d} q^{b,\eta}(t-s;x,z)b(z)\cdot\nabla_z q^\eta(s;z,y) dz ds,
  \end{equation}
  and satisfying that for every $T>0$, there is a constant $C = C(c_0,T,b_0)>0$ such that on $(0,T]\times\T^d\times\T^d$,
  $$|q^{b,\eta}(t;x,y)| \le C\varrho_\alpha(t;x-y+l).$$
  Moreover, $q^{b,\eta}$ enjoys the following properties. \\
  (i) (Conservativeness). For all $t>0$, $x\in\T^d$, $\int_{\T^d} q^{b,\eta}(t;x,y)dy=1$. \\
  (ii) (Chapman-Kolmogorov equation). For all $s,t>0$, $x,y\in\T^d$,
  \begin{equation*}
    \int_{\T^d} q^{b,\eta}(t;x,z) q^{b,\eta}(s;z,y)dz = q^{b,\eta}(t+s;x,y).
  \end{equation*}
  (iii) (Two-sided estimates). For every $T>0$, there is a constant $C_3 = C_3(c_0,T, b_0)>1$, such that on $(0,T]\times\T^d\times\T^d$,
  \begin{equation*}
    C_3^{-1} \varrho_\alpha(t;x-y+l) \le q^{b,\eta}(t;x,y) \le C_3 \varrho_\alpha(t;x-y+l).
  \end{equation*}
  (iv) (Gradient estimate). The function $\nabla_x q^{b,\eta}(t;x,y)$ is jointly continuous on $(0,\infty)\times\T^d\times\T^d$. For every $T>0$, there is a constant $C_4 = C_4(c_0,T,b_0) >0$ such that on $(0,T]\times\T^d\times\T^d$,
  \begin{equation}\label{q-b&eta-derivative}
    |\nabla_x q^{b,\eta}(t;x,y)| \le C_4\varrho_{\alpha-1}(t;x-y+l).
  \end{equation}
\end{proposition}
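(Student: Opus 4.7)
The plan is to realize $q^{b,\eta}$ as a perturbation of $q^\eta$ via a Levi-type parametrix/Picard iteration. Set $q_0(t;x,y) := q^\eta(t;x,y)$ and define recursively
\begin{equation*}
  q_{n+1}(t;x,y) := \int_0^t \int_{\T^d} q_n(t-s;x,z)\, b(z)\cdot \nabla_z q^\eta(s;z,y)\, dz\, ds,
\end{equation*}
and then declare $q^{b,\eta} := \sum_{n\ge 0} q_n$, which is exactly the unique candidate dictated by the integral equation \eqref{integral-eqn}. The joint continuity of $q^\eta$ and of $\nabla_x q^\eta$ from Proposition \ref{q^eta}.(i) makes each $q_n$ jointly continuous and periodic in $x,y$, which transfers to the sum provided convergence is uniform on compacts.

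The analytic core is a space-time convolution inequality for the functions $\varrho_\gamma$: there is a constant $C=C(c_0,T)$ such that for $t\in(0,T]$ and all $\gamma_1,\gamma_2$ in a suitable range,
\begin{equation*}
  \int_0^t \int_{\T^d} \varrho_{\gamma_1}(t-s;x-z+l)\, \varrho_{\gamma_2}(s;z-y+l')\, dz\, ds \le C\, B(\gamma_1,\gamma_2)\, \varrho_{\gamma_1+\gamma_2+\alpha-d}(t;x-y+l''),
\end{equation*}
i.e., the standard 3P-type estimate used for stable-like kernels. Iterating this with $\gamma_1=\alpha$, $\gamma_2=\alpha-1$ and using $\|b\|_0<\infty$ gives $|q_n(t;x,y)|\le C^{n+1}\|b\|_0^n \tfrac{t^{n(1-1/\alpha)\cdot?}}{\Gamma(\text{something})}\varrho_\alpha(t;x-y+l)$ with a gamma-function factor that forces absolute, locally uniform convergence of $\sum_n q_n$ to a continuous kernel dominated by $C(T,\|b\|_0)\varrho_\alpha(t;x-y+l)$. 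This automatically gives the upper half of the two-sided bound in (iii). Uniqueness of solutions to \eqref{integral-eqn} with that upper bound follows from the same convolution inequality applied to the difference of two candidates and a Gronwall-type lemma in the variable $t^{1/\alpha}$.

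For properties (i) and (ii), conservativeness is obtained by testing the integral equation against $\ind_{\T^d}$ using $\nabla_z q^\eta$ being integrable and $\ind$ in the domain (so that $\L^\eta\ind = 0$); Chapman--Kolmogorov is derived either by uniqueness applied to $(t,x)\mapsto \int q^{b,\eta}(t;x,z)q^{b,\eta}(s;z,\cdot)dz$ as a solution of the same integral equation on $[0,t+s]$, or by exhibiting $\{T^{b,\eta}_t\}$ as a Feller semigroup through the Hille--Yosida--Ray theorem applied to the closure of $(\L^{b,\eta},\C^\infty(\T^d))$ on $\C(\T^d)$ and invoking the Riesz representation. The lower bound in (iii) is then derived by Duhamel in the opposite direction: writing $q^{b,\eta} - q^\eta$ as a remainder bounded by the series and choosing $t$ small enough that the lower bound $q^\eta\ge c\,\varrho_\alpha$ from Proposition \ref{q^eta} dominates, then propagating to any $T>0$ via Chapman--Kolmogorov and the standard chaining argument on an $l$-periodic covering. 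The gradient estimate (iv) is proved by differentiating the series term by term in $x$; the first term contributes $\nabla_x q^\eta$ bounded by $\varrho_{\alpha-1}$ via \eqref{kernel-derivative}, and each subsequent term inherits the bound by applying $\nabla_x$ only on the factor $q^\eta$ inside $q_{n}$ and re-doing the convolution estimate with $\gamma_1=\alpha-1$, $\gamma_2=\alpha-1$.

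The main obstacle will be the bookkeeping around the convolution inequality and, in particular, verifying that the exponent $\alpha-1$ (the regularity loss from $\nabla q^\eta$) is still integrable in $s$ near $0$ so that the perturbation series converges: this is exactly what forces $\alpha\in(1,2)$ in the hypothesis, and it is also what limits the sharpness of the lower bound, so matching upper and lower bounds on an arbitrary time interval $(0,T]$ requires the chaining step mentioned above. The continuity of $\nabla_x q^{b,\eta}$ on the whole of $(0,\infty)\times\T^d\times\T^d$, rather than merely in $x$, then follows from dominated convergence applied to the differentiated series using \eqref{kernel-derivative} and the joint continuity of $\nabla_x q^\eta$ supplied by Proposition \ref{q^eta}.
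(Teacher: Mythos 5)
The paper does not actually prove Proposition \ref{q-properties}; it imports it verbatim from Chen--Zhang \cite[Theorem 1.5]{CZ18}, which constructs $q^{b,\eta}$ by exactly the Levi/parametrix--Picard scheme you sketch. So your outline is the right strategy, and in that sense matches the route behind the cited result. A few points, though, are off or too loose to constitute a proof. The central convolution inequality is misstated: for the normalization $\varrho_\gamma(t;x)=t^{\gamma/\alpha}(t^{-(d+\alpha)/\alpha}\wedge|x|^{-(d+\alpha)})$ one has $\int_{\R^d}\varrho_\alpha(t;x)\,dx\asymp 1$ uniformly in $t$, so the space-time convolution satisfies
\begin{equation*}
\int_0^t\int \varrho_{\gamma_1}(t-s;x-z)\,\varrho_{\gamma_2}(s;z-y)\,dz\,ds \;\le\; C\,B\!\left(\tfrac{\gamma_1}{\alpha},\tfrac{\gamma_2}{\alpha}\right)\varrho_{\gamma_1+\gamma_2}(t;x-y),
\end{equation*}
with exponent $\gamma_1+\gamma_2$, not $\gamma_1+\gamma_2+\alpha-d$ as you wrote; this is the estimate of \cite[Lemma 5.17]{GS18} that the paper invokes immediately after the Proposition, and one can see your version cannot be right since it would be $d$-dependent. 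With the correct exponent, $q_1$ is bounded by $C\|b\|_0\,t^{(\alpha-1)/\alpha}\varrho_\alpha$, and iteration yields the Gamma-function gain and the geometric convergence of $\sum q_n$ that you anticipate (your ``$?$'' should be $(\alpha-1)/\alpha$). The remaining items are sketched correctly in principle: conservativeness via integrating \eqref{integral-eqn} in $y$ and using $\nabla_z\!\int q^\eta(s;z,y)\,dy=0$; Chapman--Kolmogorov by uniqueness of the Duhamel equation; the lower bound by smallness of the perturbation series for small $t$ plus chaining over an $l$-periodic cover; and the gradient estimate by term-by-term differentiation, which works precisely because $\alpha>1$ makes $\varrho_{\alpha-1}$ time-integrable near $s=0$. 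None of these is actually carried out, so this remains an outline rather than a self-contained proof, but the plan is the same as the one underlying the cited reference.
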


\begin{proof}
  We follow the lines of \cite[Theorem 2, Lemma 15]{BJ07} to prove (iii). We define a sequence of functions $\left\{q^\eta_n: (0,\infty)\times\T^d\times\T^d \mid n\in\N \right\}$ recursively by
\begin{align*}
  q^{(0)}(t;x,y) & := q^\eta(t;x,y), \\
  q^{(n+1)}(t;x,y) & := \int_0^t\int_{\T^d} q^{(n)}(t-s;x,z)b(z)\cdot\nabla_z q^\eta(s;z,y) dz ds, \quad n\in \N.
\end{align*}
  By \eqref{kernel}, \eqref{kernel-derivative} and \cite[Eq. (92), Lemma 5.17.(c)]{GS19}, we have
  \begin{equation*}
    \begin{split}
       |q^{(1)}(t;x,y)| \le&\ \int_0^t\int_{\T^d} q^\eta(t-s;x,z)\left| b(z)\cdot \nabla_z q^\eta(s;z,y) \right| dz ds \\
       \le &\ C_1^2\|b\|_\infty \int_0^t\int_{\R^d} \varrho_{\alpha}(t-s;x-z+l) \varrho_{\alpha-1}(t;z-y) dz ds \\
       \le &\ C_1^2\|b\|_\infty \textstyle{B(\frac{\alpha}{2},\frac{\alpha-1}{2} )} \varrho_{2\alpha-1}(s;x-y+l) \\
       \le &\ C_1^2\|b\|_\infty \textstyle{B(\frac{\alpha}{2},\frac{\alpha-1}{2} )}  t^{\frac{\alpha-1}{\alpha}} \varrho_{\alpha}(t;x-y+l) \\
       =: &\ c_1(c_0,T,t,b_0) \varrho_{\alpha}(t;x-y+l).
    \end{split}
  \end{equation*}
  Note that the positive constant $c_1$ is increasing in $t$ since $\alpha\in(1,2)$. By iteration and \eqref{kernel-lower}, we obtain
  \begin{equation*}
    |q^{(n)}(t;x,y)| \le \left[ c_1(c_0,T,t,b_0) \right]^n \varrho_{\alpha}(t;x-y+l) \le \left[ c_1(c_0,T,t,b_0) \right]^n C_2^{-1} q^\eta(t;x,y).
  \end{equation*}
  Choose $t_0\le T_0$ small enough such that $c_1(c_0,T,t_0,b_0) \le \frac{C_2}{1+C_2}$ and $T=n_0t_0$ for some $n_0\in\N_+$. Then for all $(t,x,y) \in (0,t_0]\times\T^d\times\T^d$,
  \begin{equation*}
    \begin{split}
      \left( 1 - \frac{C_2^{-1} c_1(c_0,T,t_0,b_0)}{1- c_1(c_0,T,t_0,b_0)} \right) q^\eta(t;x,y) &\le q^{(0)}(t;x,y) - \sum_{n=1}^{\infty} |q^{(n)}(t;x,y)| \\
      &\le \sum_{n=0}^{\infty} q^{(n)}(t;x,y) \\
      &\le \sum_{n=0}^{\infty} |q^{(n)}(t;x,y)| \le \frac{C_2^{-1}}{1- c_1(c_0,T,t_0,b_0)} q^\eta(t;x,y).
    \end{split}
  \end{equation*}
  Set
  \begin{equation*}
    c_2(c_0,T,t_0,b_0) := \left( 1 - \frac{C_2^{-1} c_1(c_0,T,t_0,b_0)}{1- c_1(c_0,T,t_0,b_0)} \right)^{-1} \vee \frac{C_2^{-1}}{1- c_1(c_0,T,t_0,b_0)}.
  \end{equation*}
  A similar argument of \cite[Section 3]{BJ07} yields that the series $\sum_{n=0}^{\infty} q^{(n)}$ converges on $(0,t_0]\times\T^d\times\T^d$ to $q^{b,\eta}$. So we get that for all $(t,x,y) \in (0,t_0]\times\T^d\times\T^d$,
  \begin{equation*}
    (c_2(c_0,T,t_0,b_0))^{-1} q^\eta(t;x,y) \le q^{b,\eta}(t;x,y) \le c_2(c_0,T,t_0,b_0) q^\eta(t;x,y).
  \end{equation*}
  Now we apply (ii) and \eqref{kernel-lower} to deduce that for any $t\in(0,T]$ and $x,y\in\T^d$,
  \begin{equation*}
    \begin{split}
       q^{b,\eta}(t;x,y) & = \int_{\T^d} \cdots \int_{\T^d} \int_{\T^d} q^{b,\eta}(\textstyle{\frac{t}{n_0}};x,x_1) q^{b,\eta}(\textstyle{\frac{t}{n_0}};x_1,x_2) \cdots q^{b,\eta}(\textstyle{\frac{t}{n_0}};x_{n_0-1},y) dx^{(n_0-1)} \\
         & \ge c_2^{-n_0} \int_{\T^d} \cdots \int_{\T^d} \int_{\T^d} q^{\eta}(\textstyle{\frac{t}{n_0}};x,x_1) q^{\eta}(\textstyle{\frac{t}{n_0}};x_1,x_2) \cdots q^{\eta}(\textstyle{\frac{t}{n_0}};x_{n_0-1},y) dx^{(n_0-1)} \\
         & = c_2^{-n_0} q^{\eta}(t;x,y) \ge c_2^{-T/t_0} C_2 \varrho_\alpha(t;x-y+l),
    \end{split}
  \end{equation*}
  where $dx^{(n_0-1)} := dx_1 dx_2\cdots dx_{n_0-1}$, and similarly,
  \begin{equation*}
    q^{b,\eta}(t;x,y)\le c_2^{T/t_0} C_1 \varrho_\alpha(t;x-y+l).
  \end{equation*}
  The result (iii) follows by taking $C_3(c_0,T, b_0) = (c_2(c_0,T,t_0,b_0))^{T/t_0} [C_1(c_0,T) \vee (C_2(c_0))^{-1}] >1$.
\end{proof}

\begin{corollary}
  The following version of variation of parameters formula holds,
  \begin{equation}\label{integral-eqn-1}
    q^{b,\eta}(t;x,y) = q^\eta(t;x,y) + \int_0^t\int_{\T^d} q^\eta(t-s;x,z)b(z)\cdot\nabla_z q^{b,\eta}(s;z,y) dz ds.
  \end{equation}
  The function $\L^{b,\eta}_x q^{b,\eta}(t;x,y)$ is jointly continuous on $(0,\infty)\times\T^d\times\T^d$. For every $T>0$, there is a constant $C_5 = C_5(c_0,T,b_0) >0$ such that on $(0,T]\times\T^d\times\T^d$,
  \begin{equation}\label{q-b&eta-frac-derivative}
    |\L^{b,\eta}_x q^{b,\eta}(t;x,y)| \le C_5 \varrho_0(t;x-y+l).
  \end{equation}
\end{corollary}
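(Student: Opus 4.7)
The plan is to treat the three assertions in turn, leaning on Propositions~\ref{q^eta} and~\ref{q-properties}.

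For the alternative Duhamel formula \eqref{integral-eqn-1}, my approach is the standard semigroup interpolation. Let $T^\eta_t$ and $T^{b,\eta}_t$ denote the Feller semigroups associated with the kernels $q^\eta$ and $q^{b,\eta}$. For $f\in\C^\infty(\T^d)$ and $t>0$, I set $\phi(s):=T^\eta_{t-s}T^{b,\eta}_s f$ on $[0,t]$. Since $T^{b,\eta}_s f$ is regular enough that both generators act on it classically (use the gradient bound \eqref{q-b&eta-derivative}), I can compute $\phi'(s)=-\L^\eta T^\eta_{t-s}T^{b,\eta}_s f + T^\eta_{t-s}\L^{b,\eta} T^{b,\eta}_s f = T^\eta_{t-s}(b\cdot\nabla)T^{b,\eta}_s f$, using $\L^{b,\eta}-\L^\eta=b\cdot\nabla$. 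Integrating over $[0,t]$ produces $T^{b,\eta}_t f = T^\eta_t f + \int_0^t T^\eta_{t-s}(b\cdot\nabla)T^{b,\eta}_s f\,ds$; writing this in kernel form via Fubini (licensed by \eqref{q-b&eta-derivative}) and letting $f$ range over the dense subset $\C^\infty(\T^d)\subset\C(\T^d)$ gives \eqref{integral-eqn-1}.

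For the joint continuity of $\L^{b,\eta}_x q^{b,\eta}$ and the estimate \eqref{q-b&eta-frac-derivative}, I apply $\L^{b,\eta}_x=\L^\eta_x+b(x)\cdot\nabla_x$ termwise to \eqref{integral-eqn-1}. The first summand $\L^{b,\eta}_x q^\eta$ is jointly continuous by Proposition~\ref{q^eta} and dominated by $C\varrho_0(t;x-y+l)+\|b\|_0 C\varrho_{\alpha-1}(t;x-y+l)$ via \eqref{kernel-frac-derivative} and \eqref{kernel-derivative}; on $(0,T]$ this is bounded by $C\varrho_0$ since $\varrho_{\alpha-1}\le T^{(\alpha-1)/\alpha}\varrho_0$ (using $\alpha>1$). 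To handle the double-integral term, I interchange $\L^{b,\eta}_x$ with $\int_0^t\!\int_{\T^d} ds\,dz$, which is licensed by Fubini once absolute integrability of the resulting triple integral (including the $dw$ integral arising from the nonlocal part of $\L^\eta_x$) is verified using \eqref{kernel-frac-derivative}, \eqref{kernel-derivative}, \eqref{q-b&eta-derivative}, boundedness of $\kappa^\sharp,b$, and \eqref{j}. The outcome is $\int_0^t\!\int_{\T^d}\bigl[\L^\eta_x q^\eta(t-s;x,z)+b(x)\cdot\nabla_x q^\eta(t-s;x,z)\bigr]\,b(z)\cdot\nabla_z q^{b,\eta}(s;z,y)\,dz\,ds$, whose modulus is bounded by $C\varrho_0(t;x-y+l)$ via the standard heat-kernel convolution inequality $\int_0^t\!\int_{\T^d}\varrho_0(t-s;x-z+l_1)\,\varrho_{\alpha-1}(s;z-y+l_2)\,dz\,ds \le C\varrho_0(t;x-y+l_3)$ (after summing over $\Z^d$), which one proves by splitting $[0,t]=[0,t/2]\cup[t/2,t]$ and applying semigroup estimates on the $\varrho_\gamma$. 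Joint continuity of the integrals follows by dominated convergence with the same majorants.

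The main obstacle will be the passage of the nonlocal operator $\L^\eta_x$ inside the $ds\,dz$ integral together with the convolution bound. The integrable singularity at $s=t$ of $\L^\eta_x q^\eta(t-s;x,z)$, of order $(t-s)^{-1}$ in time after spatial integration, is compensated by the factor $s^{-1+1/\alpha}$ produced by the spatial integral of $\nabla_z q^{b,\eta}(s;z,y)$; since $\alpha>1$, both are locally integrable and the convolution combines to yield the $\varrho_0$ bound claimed in \eqref{q-b&eta-frac-derivative}.
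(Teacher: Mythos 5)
Your treatment of \eqref{q-b&eta-frac-derivative} and of the joint continuity is essentially the paper's: decompose $\L^{b,\eta}_x=\L^\eta_x+b(x)\cdot\nabla_x$, estimate $\L^{b,\eta}_x q^\eta$ via \eqref{kernel-derivative}--\eqref{kernel-frac-derivative}, push the operator into the Duhamel integral, and close with a space-time convolution inequality for the $\varrho_\gamma$ (the paper invokes \cite[Lemma 5.17.(c)]{GS18} where you sketch a time-splitting proof; note the raw time-integral $\int_0^t(t-s)^{-1}s^{-1/\alpha}ds$ diverges, so the bound genuinely needs the joint space-time structure of $\varrho_\gamma$, not a separate integration of spatial singularities as your last paragraph suggests). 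That part is fine in spirit.

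The gap is in your derivation of \eqref{integral-eqn-1}. The interpolation $\phi(s)=T^\eta_{t-s}T^{b,\eta}_sf$ requires, for each $s\in(0,t)$, that (a) $T^{b,\eta}_sf$ lies in the domain of the generator of $T^\eta_t$ with the generator acting on it as the pointwise expression $\L^\eta$, and (b) $\partial_s T^{b,\eta}_sf=\L^{b,\eta}T^{b,\eta}_sf$. For (a), Proposition \ref{q^eta}.(ii) only identifies the generator with $\L^\eta$ on $\C^{1+\gamma}(\T^d)$ with $1+\gamma>\alpha$; the gradient estimate \eqref{q-b&eta-derivative} that you cite yields $\C^1$ regularity, which is strictly weaker, and no H\"older estimate on $\nabla_x q^{b,\eta}$ is available at this stage. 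For (b), and more seriously, the fact that $\{T^{b,\eta}_t\}$ is a strongly continuous Feller semigroup whose generator extends $\L^{b,\eta}$ is only proved later, in Theorem \ref{Feller}, and that proof relies on the very estimate \eqref{q-b&eta-frac-derivative} and the joint continuity of $\L^{b,\eta}_xq^{b,\eta}$ established in this corollary. So the semigroup-interpolation route is circular as written. The paper avoids this by proving \eqref{integral-eqn-1} at the kernel level, by the same Picard-iteration/Volterra argument that produced \eqref{integral-eqn} (cf.\ \cite[Theorem 4.2]{CH15}), which needs only the bounds already recorded in Propositions \ref{q^eta} and \ref{q-properties}.
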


\begin{proof}
  The formula \eqref{integral-eqn-1} follows from a similar argument as the proof of \eqref{integral-eqn}, cf. \cite[Theorem 4.2]{CH15}. We prove \eqref{q-b&eta-frac-derivative}. Recall that $\L^{b,\eta} = \L^{\eta} + b\cdot\nabla$ and $\alpha>1$. By \eqref{kernel-derivative}, \eqref{kernel-frac-derivative} and \cite[Eq. (92)]{GS19}, we have for all $(t,x,y)\in (0,T]\times\T^d\times\T^d$,
  \begin{equation*}
    |\L^{b,\eta}_x q^{\eta}(t;x,y)| \le C(c_0,T,b_0) \varrho_0(t;x-y+l).
  \end{equation*}
  It follows from \eqref{q-b&eta-derivative} and \cite[Eq. (92), Lemma 5.17.(c)]{GS19} that
  \begin{equation*}
    \begin{split}
       &\ \int_0^t\int_{\T^d} \left| \L^{b,\eta}_x q^\eta(t-s;x,z)b(z)\cdot\nabla_z q^{b,\eta}(s;z,y) \right|dz ds \\
       \le &\ C(c_0,T,b_0) \int_0^t\int_{\R^d} (t-s)\varrho_{-\alpha}(t-s;x-z+l) s \varrho_{-1}(s;z-y) dz ds \\
       \le &\ C(c_0,T,b_0) \textstyle{B(1-\frac{\alpha}{2},\frac{1}{2})} \varrho_{\alpha-1}(t;x-y+l) \\
       \le &\ C(c_0,T,b_0) \varrho_0(t;x-y+l),
    \end{split}
  \end{equation*}
  where $B$ is the beta function. Combining these estimates with \eqref{integral-eqn-1}, we get \eqref{q-b&eta-frac-derivative}. The joint continuity of $\L^{b,\eta}_x q^{b,\eta}(t;x,y)$ follows from the jointly continuity of $\L^{\eta}_x q^{\eta}(t;x,y)$, $\nabla_x q^{\eta}(t;x,y)$ and $\nabla_x q^{b,\eta}(t;x,y)$ and \eqref{integral-eqn-1}.
\end{proof}

Define a family of operators
\begin{equation}\label{semigroup}
  T^{b,\eta}_t f = \int_{\T^d} q^{b,\eta}(t;\cdot,y)f(y) dy, \quad f\in \C(\T^d).
\end{equation}
By Proposition \ref{q-properties}, $\{T^{b,\eta}_t\}_{t\ge0}$ forms a (one-parameter operator) semigroup which is Markovian (positivity preserving, conservative and sub-Markovian) and Feller (each $T^{b,\eta}_t$ map $\C(\T^d)$ to $\C(\T^d)$). We can also prove the strong continuity. Hence, we have

\begin{proposition}\label{Feller}
  The family of operators $\{T^{b,\eta}_t\}_{t\ge0}$ forms a Feller semigroup on $\C(\T^d)$. Let $(\hat\L^{b,\eta},D(\hat\L^{b,\eta}))$ be the generator, then for all $\gamma>\alpha-1$, $\C^{1+\gamma}(\T^d)\subset D(\hat\L^{b,\eta})$ and $\hat\L^{b,\eta}=\L^{b,\eta}$ on $\C^{1+\gamma}(\T^d)$. Moreover, $\C^\infty(\T^d)$ is a core of $\hat\L^{b,\eta}$.
\end{proposition}

\begin{proof}
  (i). Fix $f\in\C(\T^d)$. For every $\e>0$, there is a constant $\delta>0$ such that $|f(x)-f(y)|<\e$ with $|x-y|<\delta$, $x,y\in\T^d$. Then by Proposition \ref{q-properties}.(i) and (iii),
  \begin{equation*}
    \begin{split}
       \sup_x \left| T^{b,\eta}_t f(x) - f(x) \right| &\le \sup_x \int_{\T^d} q^{b,\eta}(t;x,y)|f(y)-f(x)| dy \\
         & \le \e \sup_x \int_{\begin{subarray}{c} |x-y|<\delta \\ y\in\T^d \end{subarray}} q^{b,\eta}(t;x,y) dy + 2\|f\|_\infty \sup_x \int_{\begin{subarray}{c} |x-y|\ge\delta \\ y\in\T^d \end{subarray}} \varrho_\alpha(t;x-y+l) dy \\
         & \le \e + 2\|f\|_\infty t \int_{ |z|\ge\delta} \left( t^{-(d+\alpha)/\alpha } \wedge |z|^{-(d+\alpha)} \right) dz.
    \end{split}
  \end{equation*}
  When $t\to0^+$,
  $$\int_{ |z|\ge\delta} \left( t^{-(d+\alpha)/\alpha } \wedge |z|^{-(d+\alpha)} \right) dz \le \int_{ |z|\ge\delta} |z|^{-(d+\alpha)} dz < \infty,$$
  and then $\|T^{b,\eta}_t f-f\|_\infty\to0$. This proves that $\{T^{b,\eta}_t\}_{t\ge0}$ is strongly continuous on $\C(\T^d)$. Thus, $\{T^{b,\eta}_t\}_{t\ge0}$ is a Feller semigroup.

  (ii). To identify the generator of $\{T^{b,\eta}_t\}_{t\ge0}$, we fix $f\in\C^{1+\gamma}(\T^d)$ with $1+\gamma>\alpha$. We claim that for every $g\in\C^\infty(\T^d)$,
  \begin{equation}\label{generator}
    \lim_{t\to0} \int_{\T^d} \frac{1}{t}\left( T_t^{b,\eta}f(x)-f(x) \right) g(x) dx = \int_{\T^d} \L^{b,\eta}f(x)g(x) dx.
  \end{equation}
  Then using \cite[Theorem 1.24]{Dav80} and the fact that $\C^\infty(\T^d)$ is vaguely (i.e., weak-$*$) dense in the space $\M_b(\T^d)$ of all bounded signed Radon measures on $\T^d$, which is the topological dual of $\C(\T^d)$, we get that $\C^{1+\gamma}(\T^d)$ is contained in the domain of $\hat\L^{b,\eta}$, and the restriction of $\hat\L^{b,\eta}$ on $\C^{1+\gamma}(\T^d)$ equals to $\L^{b,\eta}$.

  Now we prove the claim \eqref{generator}. By \eqref{T-eta}, \eqref{semigroup} and \eqref{integral-eqn} we have
  \begin{equation*}
    \begin{split}
       &\ \int_{\T^d} \frac{1}{t}\left( T_t^{b,\eta}f(x)-f(x) \right) g(x) dx - \int_{\T^d} \L^{b,\eta}f(x)g(x) dx \\
       =&\ \int_{\T^d} \left[ \frac{1}{t}\left( T_t^{\eta}f(x)-f(x) \right) - \L^{\eta}f(x) \right] g(x) dx \\
       &\ + \frac{1}{t}\int_{\T^d}\left( \int_{\T^d} \int_0^t \int_{\T^d} q^{b,\eta}(t-s;x,z)b(z)\cdot\nabla_z q^\eta(s;z,y)f(y) dz ds dy - b(x)\cdot\nabla f(x)\right) g(x) dx \\
       =:&\ I + II.
    \end{split}
  \end{equation*}
  The term $I$ goes to zero, by Proposition \ref{q^eta}.(ii), as $t\to0$. For the term $II$, we use Fubini's theorem and integration by parts which we can do by the periodicity of $b,f,g$, $x\to q^{\eta}(t;x,y)$ and $x\to q^{b,\eta}(t;x,y)$, then we get
  \begin{equation*}
    \begin{split}
       II =&\ \frac{1}{t}  \int_0^t \int_{\T^d} \int_{\T^d} q^{b,\eta}(t-s;x,z) g(x) \left[ b(z)\cdot\left( \int_{\T^d} \nabla_z q^\eta(s;z,y) f(y)dy\right) - b(x)\cdot\nabla f(x) \right] dx dz ds \\
         =&\ \frac{1}{t}\int_0^t \int_{\T^d} \int_{\T^d} q^{b,\eta}(t-s;x,z) g(x) \int_{\T^d} \left[ b(z)\cdot \nabla_z q^\eta(s;z,y)-b(x)\cdot \nabla_x q^\eta(s;x,y) \right] f(y) dy dx dzds \\
         &\ + \frac{1}{t}\int_0^t \int_{\T^d} \int_{\T^d} q^{b,\eta}(t-s;x,z)g(x)b(x)\cdot\nabla_x\left[ \int_{\T^d} q^\eta(s;x,y)f(y)dy - f(x) \right] dxdzds \\
         =:&\ II_1 + \frac{1}{t}\int_0^t \int_{\T^d} \int_{\T^d} \nabla_x\left(q^{b,\eta}(t-s;x,z)g(x)b(x) \right) \cdot\left[ \int_{\T^d} q^\eta(s;x,y)f(y)dy - f(x) \right] dxdzds \\
         =:&\ II_1 + II_2.
    \end{split}
  \end{equation*}
  Since the function $(s,x,y)\to b(x)\cdot \nabla_x q^\eta(s;x,y)$ is uniformly continuous on $[0,t]\times\T^d\times\T^d$, there exists a constant $C>0$, such that $|b(x)\cdot \nabla_x q^\eta(s;x,y)|<C$ for all $(s,x,y)\in [0,t]\times\T^d\times\T^d$; and for every $\e>0$, there is $\delta>0$ such that $|b(z)\cdot \nabla_x q^\eta(s;z,y)-b(x)\cdot \nabla_x q^\eta(s;x,y)|<\e$ for $|x-z|<\delta$. Then by Proposition \ref{q-properties}.(iii), for $t\to0$,
  \begin{equation*}
    \begin{split}
       |II_1| \le&\ \|f\|_\infty\|g\|_\infty \bigg( \e \frac{1}{t}\int_0^t \iint_{\begin{subarray}{c} |x-z|<\delta \\ x,z\in\T^d \end{subarray}} q^{b,\eta}(t-s;x,z) dx dzds + 2c \frac{1}{t} \int_0^t \iint_{\begin{subarray}{c} |x-z|\ge\delta \\ x,z\in\T^d \end{subarray}} q^{b,\eta}(t-s;x,z) dx dzds \bigg) \\
         \le&\ \|f\|_\infty\|g\|_\infty \left( \e+2C \frac{1}{t} \int_0^t \int_{\T^d} \int_{|y|\ge\delta}\rho_\alpha(t;y) dy dz ds \right) \\
         \le&\ \|f\|_\infty\|g\|_\infty \left( \e+ 2Ct \int_{ |y|\ge\delta} |y|^{-(d+\alpha)} dy \right) \\
         \to&\ \e\|f\|_\infty\|g\|_\infty.
    \end{split}
  \end{equation*}
  Since $\e>0$ is arbitrary, $II_1\to0$ as $t\to0$. Moreover, the strong continuity of the semigroup $\{T_t\}_{t\ge0}$ and dominated convergence imply that $II_2\to0$ as $t\to0$. Thus, we get \eqref{generator}. For more general results of domains and representations of generators of Feller processes on $\R^d$, we refer the readers to \cite{KS19} and references therein.

  (iii). Finally, we prove that $\C^\infty(\T^d)$ is a core of the generator. We divide this proof into three steps.

  \emph{Step 1:} We prove that for every $f\in\C(\T^d)$ and all $t>0$, $T^{b,\eta}_t f$ is differentiable and the integral in $\L^{b,\eta}T^{b,\eta}_t f\in\C(\T^d)$ is absolutely integrable, and for all $x\in\T^d$,
  \begin{align}
    \nabla T^{b,\eta}_t f(x) & = \int_{\T^d} \nabla_x q^{b,\eta}(t;x,y) f(y) dy, \label{Tf-derivative} \\
    \L^{b,\eta} T^{b,\eta}_t f(x) & = \int_{\T^d} \L^{b,\eta}_x q^{b,\eta}(t;x,y) f(y) dy. \label{Tf-frac-derivative}
  \end{align}
  Using the estimate \eqref{q-b&eta-derivative} and writting the derivative as the limit of difference quotient, then \eqref{Tf-derivative} follows from the dominated convergence. Further, \eqref{Tf-frac-derivative} follows from \eqref{Tf-derivative} and Fubini's theorem. The continuity of the function $\L^{b,\eta} T^{b,\eta}_t f$ follows from the joint continuity of $\L^{b,\eta}_x q^{b,\eta}(t;x,y)$ and \eqref{Tf-frac-derivative}.

  \emph{Step 2:} Since the semigroup $\{T^{b,\eta}_t\}$ is Feller, its generator $(\hat\L^{b,\eta},D(\hat\L^{b,\eta}))$ is closed in $\C(\T^d)$ (see \cite[Definition 1.24]{BSW13}). By (i), we see that $(\L^{b,\eta},\C^\infty(\T^d)) \subset (\hat\L^{b,\eta},D(\hat\L^{b,\eta}))$, whence the former is closable in $\C(\T^d)$. Denote $(\bar\L^{b,\eta},\bar D):= \overline{(\L^{b,\eta},\C^\infty(\T^d))}$. In this step, we show that for every $f\in\C(\T^d)$ and all $t>0$, $T^{b,\eta}_tf \in \bar D$ and $\bar\L^{b,\eta}T^{b,\eta}_t f=\L^{b,\eta}T^{b,\eta}_t f$. Let $\{\phi_n\}_{n\in\N}$ be a standard mollifier such that $\mathrm{supp}(\phi_n)\subset B(0,1/n)$. Then $T^{b,\eta}_tf* \phi_n\in\C^\infty(\T^d)$ and $\|T^{b,\eta}_tf* \phi_n-T^{b,\eta}_tf\|_\infty\to 0$ as $n\to\infty$. By the definition of the closure $(\bar\L^{b,\eta},\bar D)$, it suffices to show that $\|\L^{b,\eta} (T^{b,\eta}_tf* \phi_n) - \L^{b,\eta}T^{b,\eta}_tf\|_\infty\to0$ as $n\to\infty$. Using \eqref{Tf-derivative}, \eqref{Tf-frac-derivative}, \eqref{q-b&eta-derivative}, \eqref{q-b&eta-frac-derivative}, \cite[Lemma 5.17.(a)]{GS19} and Fubini's theorem, we have
  \begin{equation*}
    \begin{split}
       &\ \left| \L^{b,\eta} (T^{b,\eta}_tf* \phi_n)(x) - (\L^{b,\eta}T^{b,\eta}_tf)* \phi_n(x) \right| \\
       \le&\ \left| \int_{\R^d}(b(x)-b(x-y))\cdot\nabla T^{b,\eta}_tf(x-y) \phi_n(y) dy \right| \\
       &\ +\bigg| \int_{\R^d}\int_{\Ro d}\left[ T^{b,\eta}_tf(x-y+z) - T^{b,\eta}_tf(x-y) - z\cdot \nabla T^{b,\eta}_tf(x-y) \ind_B(z) \right] \\
       & \qquad\qquad\qquad\quad\ \times \left(\kappa^\sharp(x,z) - \kappa^\sharp(x-y,z) \right) J(z)\phi_n(y) dydz \bigg| \\
       \le&\ \frac{1}{n^\beta} \|b\|_{\C^\beta} \| \nabla T^{b,\eta}_tf\|_\infty + \frac{1}{n^\beta} \frac{\kappa_3}{\kappa_1} \|\L^{b,\eta} T^{b,\eta}_t f\|_\infty \\
       \le&\ \frac{1}{n^\beta} C(c_0,T,b_0)\|f\|_\infty \left( \|b\|_{\C^\beta} t^{-\frac{1}{\alpha}} + \frac{\kappa_3}{\kappa_1} t^{-1}\right).
    \end{split}
  \end{equation*}
  Let $n\to\infty$, we get $\|\L^{b,\eta} (T^{b,\eta}_tf* \phi_n) - (\L^{b,\eta}T^{b,\eta}_tf)* \phi_n\|_\infty\to0$. Since $\L^{b,\eta}T^{b,\eta}_tf\in\C(\T^d)$ by Step 1, $(\L^{b,\eta}T^{b,\eta}_tf)* \phi_n \to \L^{b,\eta}T^{b,\eta}_tf$ in $\C(\T^d)$ as $n\to\infty$. Thus, we have $\|\L^{b,\eta} (T^{b,\eta}_tf* \phi_n) - \L^{b,\eta}T^{b,\eta}_tf\|_\infty\to0$ as $n\to\infty$, which ends this step.

  \emph{Step 3:} By construction, we have $(\bar\L^{b,\eta},\bar D) \subset (\hat\L^{b,\eta},D(\hat\L^{b,\eta}))$. 
  Now we show the converse, that is, for an arbitrary $f\in D(\hat\L^{b,\eta})$, we show that $f\in \bar D$ and $\bar\L^{b,\eta}f = \hat\L^{b,\eta}f$. Let $f_n=T^{b,\eta}_{1/n}f$. Since $\|f_n-f\|_\infty \to 0$ as $n\to\infty$, by the definition of the closure $(\bar\L^{b,\eta},\bar D)$, we only need to show $\|\bar\L^{b,\eta}f_n-\hat\L^{b,\eta}f\|_\infty \to 0$. From Step 2, we have $f_n\in \bar D$ and $\bar\L^{b,\eta}f_n = \hat\L^{b,\eta}f_n$. It follows that
  \begin{equation*}
    \|\bar\L^{b,\eta}f_n-\hat\L^{b,\eta}f\|_\infty = \|\hat\L^{b,\eta}f_n-\hat\L^{b,\eta}f\|_\infty = \|T^{b,\eta}_{1/n}\hat\L^{b,\eta}f-\hat\L^{b,\eta}f\|_\infty \to 0, \quad n\to \infty.
  \end{equation*}
  This gives $(\hat\L^{b,\eta},D(\hat\L^{b,\eta})) \subset (\bar\L^{b,\eta},\bar D)$ and thus $\bar\L^{b,\eta}=\hat\L^{b,\eta}$. We complete the whole proof.
\end{proof}

\section{SDEs and nonlocal PDEs}

The following result is a consequence of the nature of Feller semigroups (see \cite[Theorem 2.3, Corollary 2.5]{Kur11} and \cite[Theorem 4.4.1, Proposition 4.1.7]{EK09}).
\begin{corollary}\label{SDE-weak}
  The canonical Feller process $(X^{b,\eta};(\Omega,\F,\P))$ corresponding to $\{T^{b,\eta}_t\}_{t\ge0}$ with c\`adl\`ag trajectories is the unique solution to the martingale problem for $(\L^{b,\eta},\P\circ (X^{b,\eta}_0)^{-1})$, and also the unique weak solution to the following SDE
  \begin{equation}\label{SDE-no-diffusion}
    dX_t = b(X_t)dt + \int_0^\infty\int_{\Bo} \ind_{[0,\kappa^\sharp(X_{t-},z)]}(r) z \tilde N(dz,dr,dt) + \int_0^\infty\int_{B^c} \ind_{[0,\kappa^\sharp(X_{t-},z)]}(r) z N(dz,dr,dt),
  \end{equation}
  where $N$ is a Poisson random measure on $\R^d\times[0,\infty)\times[0,\infty)$ with intensity measure $J(z)dz\times m\times m$ and $\tilde N$ is the associated compensated Poisson random measure.
\end{corollary}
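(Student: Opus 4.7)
The plan is to combine the Feller machinery established in Theorem~\ref{Feller} with the representation results outlined in the introduction. By Theorem~\ref{Feller}, $\{T^{b,\eta}_t\}_{t\ge0}$ is a Feller semigroup on $\C(\T^d)$ whose generator $\hat\L^{b,\eta}$ coincides with $\L^{b,\eta}$ on the core $\C^\infty(\T^d)$. The Hille--Yosida--Ray procedure described in the introduction then produces, on a suitable $(\Omega,\F,\P)$, a canonical càdlàg Feller process $X^{b,\eta}$ whose one-dimensional marginals are given by the transition density $q^{b,\eta}(t;x,y)$.

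For the martingale problem, existence follows from Dynkin's formula: for every $f\in\C^\infty(\T^d)\subset D(\hat\L^{b,\eta})$ the process
\[
  M^f_t := f(X^{b,\eta}_t) - f(X^{b,\eta}_0) - \int_0^t \L^{b,\eta} f(X^{b,\eta}_s)\,ds
\]
is a $\P$-martingale. For uniqueness, since $\C^\infty(\T^d)$ is a core for $\hat\L^{b,\eta}$, the martingale problem for $(\L^{b,\eta}|_{\C^\infty(\T^d)},\P\circ(X^{b,\eta}_0)^{-1})$ is well-posed by \cite[Theorem 4.4.1]{EK09}, yielding that $X^{b,\eta}$ is its unique solution.

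To pass from the martingale problem to the SDE~\eqref{SDE-no-diffusion}, I would invoke Kurtz's representation results \cite[Theorem~2.3, Corollary~2.5]{Kur11} exactly as recalled in the introduction: take the diffusion part to be zero, $\sigma(x,z)=z$, $\iota(x,z)=\kappa^\sharp(x,z)$, and $\nu(dz)=J(z)\,dz$. The jump measure of $\L^{b,\eta}$ then reads
\[
  \eta(x,A) = \int_{\Ro d} \ind_A(z)\,\kappa^\sharp(x,z) J(z)\,dz, \qquad A\in\B(\Ro d),
\]
and the required integrability of $|\sigma|^2\iota$ near $0$ and of $\iota$ on $B^c$ follows immediately from the bound $\kappa_1\le\kappa^\sharp\le\kappa_2$ together with \eqref{j} (since $\alpha\in(1,2)$ makes $\int_{\Bo}|z|^2|z|^{-(d+\alpha)}dz<\infty$). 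Kurtz's theorem thus gives the equivalence between well-posedness of the martingale problem and well-posedness of the weak SDE~\eqref{SDE-no-diffusion} driven by the Poisson random measure $N$ and its compensator $\tilde N$, transferring the uniqueness obtained in the previous step to the SDE.

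The only delicate point is to make sure the compensator of the Poisson random measure $N$ on $\R^d\times[0,\infty)\times[0,\infty)$ is actually $J(z)dz\times m\times m$ and that the indicator $\ind_{[0,\kappa^\sharp(X_{t-},z)]}(r)$ thins out the jumps with the correct state-dependent intensity; this is precisely the content of Kurtz's thinning representation and poses no difficulty beyond verifying the integrability above. No major obstacle is anticipated; the statement is essentially a compilation of Theorem~\ref{Feller} with the general Feller/Kurtz framework already cited in the introduction.
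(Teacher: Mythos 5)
Your proposal is correct and takes essentially the same route as the paper, which states the corollary follows directly from the Feller property established in Theorem~\ref{Feller} together with the Kurtz/Ethier--Kurtz representation framework recalled in the introduction (with the same choices $\sigma(x,z)=z$, $\iota=\kappa^\sharp$, $\nu(dz)=J(z)\,dz$); you simply make explicit the integrability checks and the appeal to Dynkin's formula and \cite[Theorem 4.4.1]{EK09} that the paper leaves implicit.
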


We have a generalized version of It\^o's formula as following. The proof is similar with that of \cite[Lemma 3.4]{Xie17} and shall be omitted.

\begin{lemma}\label{Ito}
  Let $f\in\C^{1+\gamma}(\T^d)$ with $1+\gamma>\alpha$. If $X$ satisfies the SDE \eqref{SDE-no-diffusion}, then
  \begin{equation*}
    \begin{split}
      f(X_t) - f(X_0) = &\ \int_0^t \L^{b,\eta}f(X_s) ds \\
         &\ + \int_0^t \int_0^\infty \int_{\Ro d} \left[ f(X_{s-}+\ind_{[0,\kappa^\sharp(X_{s-},z)]}(r)z) - f(X_{s-}) \right] \tilde N(dz,dr,ds).
    \end{split}
  \end{equation*}
\end{lemma}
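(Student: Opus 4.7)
The plan is to establish the formula by smooth approximation. Let $\{\phi_n\}_{n\in\N}$ be a standard mollifier on $\R^d$ and set $f_n := f * \phi_n$, so that $f_n \in \C^\infty(\T^d)$ and $\|f_n - f\|_{\C^{1+\gamma'}(\T^d)} \to 0$ for any $0 < \gamma' < \gamma$; in particular we may still arrange $1+\gamma' > \alpha$. Since $f_n \in \C^2(\T^d)$, the classical It\^o formula for semimartingales with jumps (as in \cite[Chapter 4]{App09}) applied to the SDE \eqref{SDE-no-diffusion} yields
\begin{equation*}
  f_n(X_t) - f_n(X_0) = \int_0^t \L^{b,\eta} f_n(X_s)\, ds + M_t^n,
\end{equation*}
where $M_t^n := \int_0^t\int_0^\infty\int_{\Ro d}\bigl[f_n(X_{s-}+\ind_{[0,\kappa^\sharp(X_{s-},z)]}(r)z) - f_n(X_{s-})\bigr]\tilde N(dz,dr,ds)$.

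The next step is to pass to the limit $n\to\infty$ in each term. The convergence $f_n(X_t) \to f(X_t)$ on the left-hand side is immediate from $\|f_n-f\|_0\to 0$. For the drift term, I would use the Taylor-type bound
\begin{equation*}
  |f_n(x+z) - f_n(x) - z\cdot\nabla f_n(x)\ind_B(z)| \le C\,\|f_n\|_{\C^{1+\gamma'}(\T^d)}\bigl((|z|^{1+\gamma'} \wedge 1) + \ind_{B^c}(z)\bigr),
\end{equation*}
which is integrable against $\kappa^\sharp(x,z) J(z)\,dz$ thanks to \eqref{j}, the boundedness of $\kappa^\sharp$, and $1+\gamma'>\alpha$. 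The analogous bound holds for $f$. Combined with $f_n \to f$ in $\C^{1+\gamma'}(\T^d)$, dominated convergence gives $\L^{b,\eta} f_n \to \L^{b,\eta} f$ uniformly on $\T^d$, hence $\int_0^t \L^{b,\eta}f_n(X_s)\,ds \to \int_0^t \L^{b,\eta}f(X_s)\,ds$ almost surely.

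For the martingale term, I would apply It\^o's $L^2$ isometry,
\begin{equation*}
  \E|M_t^n - M_t|^2 = \E \int_0^t\!\!\int_{\Ro d}\bigl|(f_n-f)(X_{s-}+z) - (f_n-f)(X_{s-})\bigr|^2 \kappa^\sharp(X_{s-},z)\, J(z)\, dz\, ds,
\end{equation*}
where $M_t$ denotes the target compensated Poisson integral in the statement. Splitting into $|z| \ge 1$ (controlled by $4\|f_n-f\|_0^2$, integrable against $J(z)\,dz$) and $|z| < 1$ (controlled by $\|\nabla(f_n-f)\|_0^2\,|z|^2$, integrable against $J(z)\,dz$ since $\alpha < 2$), one concludes $M_t^n \to M_t$ in $L^2(\P)$ and, after passing to a subsequence, almost surely. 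Combining the three limits gives the desired identity.

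The main obstacle is securing uniform convergence of the non-local part of $\L^{b,\eta} f_n$ to $\L^{b,\eta} f$ near $z = 0$: the hypothesis $1+\gamma > \alpha$ is precisely what makes the second-order Taylor remainders of $f_n$ and $f$ beat the $|z|^{-(d+\alpha)}$ singularity of $J$, and without it the nonlocal operator need not even be well-defined on $\C^{1+\gamma}(\T^d)$. Everything else reduces to routine dominated convergence and It\^o isometry, as indicated.
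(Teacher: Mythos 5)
Your proof is correct and follows the standard mollification route, which is precisely the approach of the reference the paper cites for this lemma (Xie, \emph{Stoch. Process. Their Appl.} 2017, Lemma 3.4): approximate $f$ by $f_n=f*\phi_n\in\C^\infty(\T^d)$, apply the classical It\^o formula, and pass to the limit using that the bound $1+\gamma'>\alpha$ makes $\L^{b,\eta}$ continuous from $\C^{1+\gamma'}(\T^d)$ to $\C(\T^d)$ for the drift term, and It\^o's isometry with the $|z|\ge1$/$|z|<1$ split for the compensated integral.
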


We can solve the nonlocal Poisson with zeroth-order term, using the semigroup representation.

\begin{corollary}\label{Poisson-zero}
  For every $f\in\C^\beta(\T^d)$ and $\lambda>0$, there exists a unique classical solution $u\in\C^{\alpha+\beta}(\T^d)$ to the Poisson equation
  \begin{equation}\label{Poisson-0}
    \lambda u - \L^{b,\eta} u = f.
  \end{equation}
\end{corollary}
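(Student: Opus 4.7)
The natural candidate solution is the resolvent of the Feller semigroup $\{T_t^{b,\eta}\}_{t\ge 0}$ from Theorem \ref{Feller}, namely
$$u(x) := \int_0^\infty e^{-\lambda t} T_t^{b,\eta} f(x)\, dt = \int_0^\infty e^{-\lambda t} \int_{\T^d} q^{b,\eta}(t;x,y) f(y)\, dy\, dt.$$
Contractivity of $\{T_t^{b,\eta}\}$ on $(\C(\T^d),\|\cdot\|_0)$ ensures absolute convergence with $\|u\|_0\le \lambda^{-1}\|f\|_0$, and the Hille--Yosida theorem applied to $(\hat\L^{b,\eta},D(\hat\L^{b,\eta}))$ places $u$ in $D(\hat\L^{b,\eta})$ with $(\lambda-\hat\L^{b,\eta})u=f$. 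Uniqueness among $\C^{\alpha+\beta}(\T^d)$ solutions is then immediate: any such solution $v$ lies in $D(\hat\L^{b,\eta})$ with $\hat\L^{b,\eta}v=\L^{b,\eta}v$ by Theorem \ref{Feller} (since $\alpha+\beta-1>\alpha-1$), hence $(\lambda-\hat\L^{b,\eta})v=f$; as $\lambda>0$ lies in the resolvent set of the generator of a contraction semigroup, $v=u$.

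The nontrivial step is to verify that this $u$ in fact belongs to $\C^{\alpha+\beta}(\T^d)$. I would differentiate under the integral,
$$\nabla_x^k u(x) = \int_0^\infty e^{-\lambda t} \int_{\T^d} \nabla_x^k q^{b,\eta}(t;x,y) f(y)\, dy\, dt$$
for $k\in\{1,2\}$ as dictated by $\alpha+\beta\in(1,3)$, relying on the gradient bound \eqref{q-b&eta-derivative} and an analogous higher-derivative bound that one extracts by iterating the Duhamel identity \eqref{integral-eqn-1} exactly as \eqref{q-b&eta-frac-derivative} was derived. To promote $L^\infty$ derivative bounds to H\"older control on the top derivative, I would exploit $f\in\C^\beta$: writing $f(y)=(f(y)-f(x))+f(x)$ and invoking conservativeness $T_t^{b,\eta}1=1$ to annihilate the constant piece, the extra factor $|y-x|^\beta$ combined with a short-time/long-time splitting of the $t$-integral at the scale $|x-x'|^\alpha$ yields the desired $\C^{\alpha+\beta}$ modulus.

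The principal obstacle is making this last step quantitative when $\kappa^\sharp$ is only H\"older in $x$. The cleanest route is a freezing argument: at each base point $x_0\in\T^d$, compare $\L^{b,\eta}$ with the translation-invariant operator $\L^{b(x_0),\eta_{x_0}}$ obtained by freezing the drift and L\'evy density, so that the residual $(\L^{b,\eta}-\L^{b(x_0),\eta_{x_0}})u$ is controlled by $|x-x_0|^\beta(\|b\|_\beta\|\nabla u\|_0 + \kappa_3\|u\|_\alpha)$ through \eqref{Holder} and Assumption \ref{bc}, and hence behaves as a lower-order perturbation that can be absorbed. This reduces the required Schauder estimate to the translation-invariant case, which is classical for $\alpha$-stable-like kernels via the heat-kernel machinery of \cite{GS18} and \cite{CZ18}, closing the regularity argument and completing the proof.
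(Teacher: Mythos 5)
Your resolvent formula $u=\int_0^\infty e^{-\lambda t}T^{b,\eta}_t f\,dt$ and the uniqueness argument via Theorem \ref{Feller} and the contraction-semigroup resolvent identity are correct and match the paper's proof essentially verbatim. The gap is in the regularity step. Your primary route, differentiating under the integral, requires pointwise bounds on $\nabla^2_x q^{b,\eta}(t;x,y)$ (certainly needed when $\alpha+\beta>2$, and in weaker ``Hölder modulus of $\nabla_x q^{b,\eta}$'' form when $\alpha+\beta\le 2$). These are simply not available: Proposition \ref{q-properties} gives only $\nabla_x q^{b,\eta}$ and the nonlocal derivative $\L^{b,\eta}_x q^{b,\eta}$, and the latter is \emph{not} a surrogate for $\nabla^2_x$, since it scales like $t^{-1}$ in time while $\nabla^2_x q^\eta$ scales like $t^{-2/\alpha}$, which is strictly worse for $\alpha<2$. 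Iterating the Duhamel identity \eqref{integral-eqn-1} as you suggest does not produce a $\nabla^2_x$ bound either, because $b$ is merely $\C^\beta$: differentiating a second time hits $\nabla^2_x q^\eta(t-s;\cdot)$, whose $s\to t$ singularity $\sim(t-s)^{-2/\alpha}$ is non-integrable, and closing this requires exactly the subtract-off-a-constant cancellation and Hölder bookkeeping that constitutes the Schauder proof itself, not a simple dominated-convergence argument.

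Your fallback, the freezing argument, is sound as far as it goes, but it only yields an \emph{a priori} bound of the form $\|u\|_{\alpha+\beta}\le C(\|u\|_0+\|f\|_\beta)$ for solutions that are \emph{already} in $\C^{\alpha+\beta}$. It does not by itself show that the resolvent $u$ has this regularity; that would be circular. The paper bridges this gap in the standard way: it cites the a priori Schauder estimate for variable-$\kappa^\sharp$ stable-like operators from \cite{Bas09}, cites the constant-coefficient well-posedness from \cite{Pri12} as the endpoint, and then runs the method of continuity (\cite[Section 5.2]{GT01}) to transfer existence to the variable-coefficient case. Without that last ingredient your argument does not close; adding it would make your proposal coincide with the paper's.
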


\begin{proof}
  We first prove that if $u_\lambda\in\C^{\alpha+\beta}(\T^d)$ is a solution of \eqref{Poisson-0}, then $u_\lambda$ must have the following representation
  \begin{equation}\label{Poisson-sol}
    u_\lambda(x) = \int_0^\infty e^{-\lambda t} T^{b,\eta}_t f(x) dt,
  \end{equation}
  and there exists a constant $C=C(c_0,b_0,\lambda)>0$ not depending on $f$ such that
  \begin{equation}\label{energy}
    \|u_\lambda\|_{\C^{\alpha+\beta}} \le C \|f\|_{\C^\beta}.
  \end{equation}
  Since the restriction of the generator $\hat\L^{b,\eta}$ on $\C^{\alpha+\beta}(\T^d)$ is $\L^{b,\eta}$, we have
  \begin{equation*}
    \begin{split}
      \int_0^\infty e^{-\lambda t} T^{b,\eta}_t f dt & = \int_0^\infty e^{-\lambda t} T^{b,\eta}_t (\lambda u_\lambda - \L^{b,\eta} u_\lambda) dt = -\int_0^\infty \frac{d}{dt}\left( e^{-\lambda t} T^{b,\eta}_t u_\lambda \right) dt \\
         & = u_\lambda - \lim_{t\to\infty} e^{-\lambda t} T^{b,\eta}_t u_\lambda = u_\lambda,
    \end{split}
  \end{equation*}
  where we have used the fact that $\|e^{-\lambda t} T^{b,\eta}_t u_\lambda\|_\infty\le e^{-\lambda t} \|u_\lambda\|_\infty\to 0$ as $t\to\infty$. This gives \eqref{Poisson-sol} and the uniqueness follows. Further, using the Schauder-type estimates in \cite[Theorem 7.1, Theorem 7.2]{Bas09}, there exist a constant $C=C(c_0,b_0,\lambda)>0$ such that
  \begin{equation*}
    \|u_\lambda\|_{\C^{\alpha+\beta}} \le C (\|u_\lambda\|_\infty+\|f\|_{\C^\beta}).
  \end{equation*}
  The representation \eqref{Poisson-sol} yields that
  \begin{equation*}
    \|u_\lambda\|_\infty \le \|f\|_\infty \int_0^\infty e^{-\lambda t}  dt = \frac{1}{\lambda} \|f\|_\infty.
  \end{equation*}
  The estimate \eqref{energy} follows.

  Moreover, it is shown in \cite[Theorem 3.4]{Pri12} that when the function $\kappa^\sharp$ is a constant, the existence and uniqueness hold in $\C^{\alpha+\beta}(\T^d)$. We can now obtain the existence of \eqref{Poisson-0} by the energy estimate \eqref{energy} and the method of continuity, see \cite[Section 5.2]{GT01}), also cf. \cite[Theorem 3.2]{HDS22}.
\end{proof}

\end{appendices}

\paragraph{Acknowledgements.}
The research of J. Duan was partly supported by the NSF grant 1620449. The research of Q. Huang was partly supported by China Scholarship Council (CSC), and NSFC grants 11531006 and 11771449. The research of R. Song is supported in part by a grant from the Simons Foundation ($\#$ 429343, Renming Song). We would like to thank Dr.~Yanjie Zhang for useful discussions.

{\footnotesize
\bibliographystyle{MyStyle-plainnat}
\bibliography{HomoFeller-ref}
}
\end{document}